	\theoremstyle{plain}
	\newtheorem{theorem}{Theorem}
	\newtheorem{lemma}{Lemma}
	\newtheorem{proposition}{Proposition}
	\newtheorem{corollary}{Corollary}
	\theoremstyle{definition}
	\newtheorem{definition}{Definition}
	\newtheorem{remark}{Remark}
	\newtheorem{example}{Example}
\newcommand{\Prim}{\mathrm{Prim}}
\title{Graph $C^\ast$-algebras with a $T_1$ primitive ideal space}
\author{James Gabe}
\address{Department of Mathematical Sciences, University of Copenhagen, Uni\-versi\-tets\-parken~5, DK-2100 Copenhagen, Denmark}
\email{gabe@math.ku.dk}
\keywords{Graph $C^*$-algebras, filtered $K$-theory}
\subjclass[2010]{Primary: 46L35}
\thanks{Supported by the Danish National Research Foundation through the Centre for Symmetry and Deformation (DNRF92)}
\begin{document}
\maketitle

\begin{abstract}
We give necessary and sufficient conditions which a graph should satisfy in order for its associated $C^\ast$-algebra to have a $T_1$ primitive ideal space. We give a description of which one-point sets in such a primitive ideal space are open, and use this to prove that any \index{purely infinite}purely infinite graph $C^\ast$-algebra with a $T_1$ (in particular Hausdorff) primitive ideal space, is a $c_0$-direct sum of \index{Kirchberg algebra}Kirchberg algebras. Moreover, we show that graph $C^\ast$-algebras with a $T_1$ primitive ideal space canonically may be given the structure of a $C(\widetilde{\mathbb N})$-algebra, and that isomorphisms of their $\widetilde{\mathbb N}$-filtered $K$-theory (without coefficients) lift to $E(\widetilde{\mathbb N})$-equivalences, as defined by Dadarlat and Meyer.
\end{abstract}

\section{Introduction}\label{gabe:sec:intro}

When classifying non-simple $C^\ast$-algebras a lot of focus has been on $C^\ast$-algebras with finitely many ideals. However, Dadarlat and Meyer recently proved in \cite{gabe:DM} a Universal Multicoefficient Theorem in equivariant $E$-theory for separable $C^\ast$-algebras over second countable, zero-dimensional, compact Hausdorff spaces. In particular, together with the strong classification result of Kirchberg \cite{gabe:Kirchberg}, this shows that any separable, nuclear, $\mathscr O_\infty$-absorbing $C^\ast$-algebra with a zero-dimensional, compact Hausdorff primitive ideal space, for which all simple subquotients are in the classical bootstrap class, is strongly classified by its filtered total $K$-theory. This suggests and motivates the study of $C^\ast$-algebras with infinitely many ideals, in the eyes of classification.

In this paper we consider graph $C^\ast$-algebras with a $T_1$ primitive ideal space, i.e. a primitive ideal space in which every one-point set is closed. Clearly our main interest are such graph $C^\ast$-algebras with infinitely many ideals, since any finite $T_1$ space is discrete. In Section \ref{gabe:sec:prel} we recall the definition of graph $C^\ast$-algebras and many of the related basic concepts. In particular, we give a complete description of the primitive ideal space of a graph $C^\ast$-algebra. In Section \ref{gabe:sec:t1} we find necessary and sufficient condition which a graph should satisfy in order for the induced $C^\ast$-algebra to have a $T_1$ primitive ideal space. In Section \ref{gabe:sec:clopen} we prove that a lot of subsets of such primitive ideal spaces are both closed and open. In particular, we give a complete description of when one-point sets are open. We use this to show that any purely infinite graph $C^\ast$-algebra with a $T_1$ primitive ideal space is a $c_0$-direct sum of Kirchberg algebras. Moreover, we show that any graph $C^\ast$-algebra with a $T_1$ primitive ideal space may be given a canonical structure of a (not necessarily continuous) $C(\widetilde{\mathbb N})$-algebra, where $\widetilde{\mathbb N}$ is the one-point compactification of $\mathbb N$. As an ending remark, we prove that $\widetilde{\mathbb N}$-filtered $K$-theory classifies these $C(\widetilde{\mathbb N})$-algebras up to $E(\widetilde{\mathbb N})$-equivalence, as defined by Dadarlat and Meyer in \cite{gabe:DM}. 

\subsection*{Acknowledgement}

The author would like to thank his PhD-advisers Søren Eilers and Ryszard Nest for valuable discussions.

\section{Preliminaries}\label{gabe:sec:prel}

We recall the definition of a \index{graph $C^\ast$-algebra}graph $C^\ast$-algebra and many related definitions and properties. Let $E=(E^0,E^1,r,s)$ be a countable directed graph, i.e. a graph with countably many vertices $E^0$, countably many edges $E^1$ and a range and source map $r,s \colon E^1\to E^0$ respectively. A vertex $v\in E^0$ is called a \index{sink}sink if $s^{-1}(v)=\emptyset$ and an \index{infinite emitter}infinite emitter if $|s^{-1}(v)|=\infty$. A graph with no infinite emitters is called \index{row-finite}row-finite.

We define the graph $C^\ast$-algebra of $E$, $C^\ast(E)$, to be the universal $C^\ast$-algebra generated by a family of mutually orthogonal projections $\{ p_v : v\in E^0\}$ and partial isometries with mutually orthogonal ranges $\{ s_e : e \in E^1\}$, subject to the following Cuntz-Krieger relations
\begin{enumerate}
\item $s_e^\ast s_e = p_{r(e)}$ for $e \in E^1$,
\item $s_e s_e^\ast \leq p_{s(e)}$ for $e\in E^1$,
\item $p_v = \sum_{e \in s^{-1}(v)} s_e s_e^\ast$ for $v\in E^0$ such that $0 < |s^{-1}(v)| < \infty$.
\end{enumerate}

By universality there is a gauge action $\gamma \colon \mathbb T \to \textrm{Aut}(C^\ast(E))$ such that $\gamma_z(p_v)=p_v$ and $\gamma_z (s_e) = z s_e$ for $v\in E^0, e \in E^1$ and $z \in \mathbb T$. An ideal in $C^\ast(E)$ is said to be \index{gauge-invariant ideal}gauge-invariant if is invariant under $\gamma$. All ideals are assumed to be two-sided and closed.

If $\alpha_1,\dots , \alpha_n$ are edges such that $r(\alpha_i) = s(\alpha_{i+1})$ for $i = 1,\dots ,n-1$, then we say that $\alpha=(\alpha_1,\dots, \alpha_n)$ is a path, with source $s(\alpha) = s(\alpha_1)$ and range $r(\alpha)=r(\alpha_n)$. A loop is a path of positive length such that the source and range coincide, and this vertex is called the base of the loop. A loop $\alpha$ is said to have an exit, if there exist $e\in E^1$ and $i=1,\dots ,n$ such that $s(e)=s(\alpha_i)$ but $e\neq \alpha_i$. A loop $\alpha$ is called simple if $s(\alpha_i)\neq s(\alpha_j)$ for $i\neq j$. A graph $E$ is said to have \index{condition (K)}condition (K) if each vertex $v\in E^0$ is the base of no (simple) loop or is the base of at least two simple loops. It turns out that a graph $E$ has condition (K) if and only if every ideal in $C^\ast(E)$ is gauge-invariant if and only if $C^\ast(E)$ has real rank zero.

For $v,w\in E^0$ we write $v\geq w$ if there is a path $\alpha$ with $s(\alpha) = v$ and $r(\alpha) = w$. A subset $H$ of $E^0$ is called \index{hereditary set} hereditary if $v\geq w$ and $v\in H$ implies that $w\in H$. A subset $H$ of $E^0$ is called \index{saturated set}saturated if whenever $v\in E^0$ satisfies $0<|s^{-1}(v)|<\infty$ and $r(s^{-1}(v))\subseteq H$ then $v\in H$. If $X$ is a subset of $E^0$ then we let $\Sigma H(X)$ denote the smallest hereditary and saturated set containing $X$. If $H$ is hereditary and saturated we define
\begin{eqnarray*}
H^\mathrm{fin}_\infty &=& \{ v \in E^0 \backslash H : |s^{-1}(v)| = \infty \textrm{ and } 0 < |s^{-1}(v) \cap r^{-1}(E^0\backslash H)| < \infty \}, \\
H^\emptyset_\infty &=& \{ v \in E^0 \backslash H : |s^{-1}(v)| = \infty \textrm{ and } s^{-1}(v) \cap r^{-1}(E^0\backslash H) = \emptyset \}.
\end{eqnarray*}
By \cite[Theorem 3.6]{gabe:BHRS} there is a one-to-one correspondence between pairs $(H,B)$, where $H\subseteq E^0$ is hereditary and saturated and $B\subseteq H^\mathrm{fin}_\infty$, and the gauge-invariant ideals of $C^\ast(E)$. In fact, this is a lattice isomorphism when the different sets are given certain lattice structures. The ideal corresponding to $(H,B)$ is denoted $J_{H,B}$ and if $B=\emptyset$ we denote it by $J_H$.

A non-empty subset $M\subseteq E^0$ is called a \index{maximal tail}maximal tail if the following three conditions are satisfied.
\begin{enumerate}
\item[(1)] If $v\in E^0, w\in M$ and $v\geq w$ then $v\in M$.
\item[(2)] If $v\in M$ and $0< |s^{-1}(v)|<\infty$ then there exists $e\in E^1$ such that $s(e) = v$ and $r(e) \in M$.
\item[(3)] For every $v,w\in M$ there exists $y\in M$ such that $v\geq y$ and $w\geq y$.
\end{enumerate}
Note that $E^0\backslash M$ is hereditary by (1) and saturated by (2). Moreover, by (3) it follows that $(E^0 \backslash M)^\emptyset_\infty$ is either empty or consists of exactly one vertex. We let $\mathscr M(E)$ denote the set of all maximal tails in $E$, and let $\mathscr M_\gamma(E)$ denote the set of all maximal tails $M$ in $E$ such that each loop in $M$ has an exit in $M$. We let $\mathscr M_\tau(E) = \mathscr M(E) \backslash \mathscr M_\gamma(E)$.

If $X\subseteq E^0$ then define
\[
\Omega(X) = \{ w \in E^0\backslash X : w \ngeq v \textrm{ for all } v \in X\}.
\]
Note that if $M$ is a maximal tail, then $\Omega(M) = E^0\backslash M$. For a vertex $v\in E^0$, $E\backslash \Omega(v)$ is a maximal tail if and only if $v$ is a sink, an infinite emitter or if $v$ is the base of a loop.

We define the set of \index{breaking vertex}breaking vertices to be
\[
BV(E) = \{ v \in E^0 : |s^{-1}(v)|= \infty \textrm{ and } 0<|s^{-1}(v) \backslash r^{-1}(\Omega(v))| < \infty\}.
\]
Hence an infinite emitter $v$ is a breaking vertex if and only if $v\in \Omega(v)^{\mathrm{fin}}_\infty$.

In \cite{gabe:HS} they define for each $N\in \mathscr M_\tau(E)$ and $t\in \mathbb T$ a (primitive) ideal $R_{N,t}$, and prove that there is a bijection
\[
\mathscr M_\gamma (E) \sqcup BV(E) \sqcup (\mathscr M_\tau(E) \times \mathbb T) \to \Prim C^\ast(E)
\]
given by
\begin{eqnarray*}
\mathscr M_\gamma(E) \ni M &\mapsto& J_{\Omega(M),\Omega(M)^\mathrm{fin}_\infty} \\
BV(E) \ni v &\mapsto& J_{\Omega(v),\Omega(v)^\mathrm{fin}_\infty \backslash \{ v\}} \\
\mathscr M_\tau (E) \times \mathbb T \ni (N,t) & \mapsto& R_{N,t}.
\end{eqnarray*}

In \cite{gabe:HS}, Hong and Szymański give a complete description of the hull-kernel topology on $\Prim C^\ast(E)$ in terms of the maximal tails and breaking vertices. In order to describe this we use the following notation. Whenever $M\in \mathscr M_\tau(E)$ there is a unique (up to cyclic permutation) simple loop in $M$ which generates $M$, and we denote by $L^0_M$ the set of all vertices in this. If $Y\subseteq \mathscr M_\tau(E)$ we let
\begin{eqnarray*}
Y_\mathrm{min} &:=& \{ U \in Y : \textrm{for all } U' \in Y, U'\neq U \textrm{ there is no path from } L^0_U \textrm{ to } L^0_{U'} \}, \\
Y_\infty &:=& \{ U \in Y : \textrm{for all } V\in Y_\mathrm{min} \textrm{ there is no path from } L^0_U \textrm{ to } L^0_V \}.
\end{eqnarray*}

Due to a minor mistake in \cite{gabe:HS} the description of the topology is however not entirely correct. We will give a correct description below and explain what goes wrong in the original proof in Remark \ref{gabe:HSmistake}.

\begin{theorem}[Hong-Szymański]\label{gabe:HSthm}
Let $E$ be a countable directed graph. Let $X\subseteq \mathscr M_\gamma(E), W\subseteq BV(E), Y\subseteq \mathscr M_\tau(E)$, and let $D(U)\subseteq \mathbb T$ for each $U\in Y$. If $M\in \mathscr M_\gamma(E), v\in BV(E), N\in \mathscr M_\tau(E)$, and $z \in \mathbb T$, then the following hold.
\begin{enumerate}
\item\label{gabe:HSthm1} $M\in \overline X$ if and only if one of the following three condition holds.
\begin{enumerate}
\item[(i)] $M \in X$,
\item[(ii)] $M\subseteq \bigcup X$ and $\Omega(M)^\emptyset_\infty=\emptyset$,
\item[(iii)] $M\subseteq \bigcup X$ and $|s^{-1}(\Omega(M)^\emptyset_\infty)\cap r^{-1}(\bigcup X) | = \infty$.
\end{enumerate}
\item $v\in \overline X$ if and only if $v\in \bigcup X$ and $|s^{-1}(v) \cap r^{-1}(\bigcup X)| = \infty$.
\item\label{gabe:HSthm3} $(N,z)\in \overline X$ if and only if $N\subseteq \bigcup X$.
\item $M\in \overline W$ if and only if either
\begin{enumerate}
\item[(i)] $M \subseteq E^0\backslash \bigcap_{w\in W} \Omega(w)$ and $\Omega(M)^\emptyset_\infty =\emptyset$, or
\item[(ii)] $M \subseteq E^0\backslash \bigcap_{w\in W} \Omega(w)$ and $|s^{-1}(\Omega(M)^\emptyset_\infty) \cap r^{-1}(E^0\backslash \bigcap_{w\in W} \Omega(w))|=\infty$.
\end{enumerate}
\item $v\in \overline W$ if and only if either
\begin{enumerate}
\item[(i)] $v\in W$, or
\item[(ii)] $v \in E^0\backslash \bigcap_{w\in W} \Omega(w)$ and $|s^{-1}(v)\cap r^{-1}(E^0\backslash \bigcap_{w\in W} \Omega(w))|=\infty$.
\end{enumerate}
\item $(N,z) \in \overline W$ if and only if $N\subseteq E^0\backslash \bigcap_{w\in W} \Omega(w)$.
\item\label{gabe:HSthm7} $M$ is in the closure of $\{ (U,t) : U\in Y , t\in D(U) \}$ if and only if one of the following four conditions holds.
\begin{enumerate}
\item[(i)] $M\subseteq \bigcup Y_\infty$ and $\Omega(M)^\emptyset_\infty = \emptyset$,
\item[(ii)] $M\subseteq \bigcup Y_\infty$ and $|s^{-1}(\Omega(M)^\emptyset_\infty) \cap r^{-1}(\bigcup Y_\infty)|=\infty$,
\item[(iii)] $M\subseteq \bigcup Y_\mathrm{min}$ and $\Omega(M)^\emptyset_\infty = \emptyset$,
\item[(iv)] $M\subseteq \bigcup Y_\mathrm{min}$ and $|s^{-1}(\Omega(M)^\emptyset_\infty) \cap r^{-1}(\bigcup Y_\mathrm{min})|=\infty$.
\end{enumerate}
\item $v$ is in the closure of $\{ (U,t) : U\in Y , t\in D(U) \}$ if and only if either
\begin{enumerate}
\item[(i)] $v\in \bigcup Y_\infty$ and $|s^{-1}(v) \cap r^{-1}(\bigcup Y_\infty)| = \infty$, or
\item[(ii)] $v\in \bigcup Y_\mathrm{min}$ and $|s^{-1}(v) \cap r^{-1}(\bigcup Y_\mathrm{min})| = \infty$.
\end{enumerate}
\item\label{gabe:HSthm9} $(N,z)$ is in the closure of $\{ (U,t) : U\in Y , t\in D(U) \}$ if and only one of the following three conditions holds.
\begin{enumerate}
\item[(i)] $N \subseteq \bigcup Y_\infty$,
\item[(ii)] $N\notin Y_\mathrm{min}$ and $N\subseteq \bigcup Y_\mathrm{min}$,
\item[(iii)] $N\in Y_\mathrm{min}$ and $z \in \overline{D(N)}$.
\end{enumerate}
\end{enumerate}
\end{theorem}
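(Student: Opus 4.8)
The plan is to use the standard description of the hull--kernel topology: for a set $S$ of primitive ideals of a $C^\ast$-algebra and a primitive ideal $P$, one has $P\in\overline S$ if and only if $P\supseteq\bigcap_{Q\in S}Q$. Since every subset of $\Prim C^\ast(E)$ is a union of pieces of the three shapes occurring in the statement and closure commutes with finite unions, it suffices to treat $S=X$, $S=W$, and the fibred set $S=\{(U,t):U\in Y,\ t\in D(U)\}$ one at a time. For each I would (a) identify the ideal $I_S:=\bigcap_{Q\in S}Q$, and (b) run through the three shapes of primitive ideal --- $J_{\Omega(M),\Omega(M)^{\mathrm{fin}}_\infty}$, $J_{\Omega(v),\Omega(v)^{\mathrm{fin}}_\infty\setminus\{v\}}$ and $R_{N,z}$ --- deciding exactly when each contains $I_S$. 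Inclusions between \emph{gauge-invariant} ideals, and hence lattice meets and the ideal $I_S$ when it happens to be gauge-invariant, are read off from \cite[Theorem~3.6]{gabe:BHRS}: $J_{H_1,B_1}\subseteq J_{H_2,B_2}$ iff $H_1\subseteq H_2$ and $B_1\subseteq H_2\cup B_2$; inclusions involving the $R_{N,z}$ use their construction in \cite{gabe:HS}, together with $C^\ast(E)/J_H\cong C^\ast(E/H)$ for the quotient graph $E/H$ and the fact that each $R_{N,z}$ contains the largest gauge-invariant ideal $J_{\Omega(N),\Omega(N)^{\mathrm{fin}}_\infty}$ with hereditary saturated part $\Omega(N)$.

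For $S=X\subseteq\mathscr M_\gamma(E)$ and $S=W\subseteq BV(E)$ everything in sight is gauge-invariant, so $I_S$ is gauge-invariant, and computing the meet gives e.g.\ $I_X=J_{H_X,B_X}$ with $H_X=\bigcap_{M\in X}\Omega(M)=E^0\setminus\bigcup X$ and $B_X=(H_X)^{\mathrm{fin}}_\infty\cap\bigcap_{M\in X}\bigl(\Omega(M)\cup\Omega(M)^{\mathrm{fin}}_\infty\bigr)$, and similarly for $W$. Feeding this into the inclusion criterion, $J_{\Omega(M),\Omega(M)^{\mathrm{fin}}_\infty}\supseteq I_X$ forces $H_X\subseteq\Omega(M)$, i.e.\ $M\subseteq\bigcup X$; and since $\Omega(M)^\emptyset_\infty$ is empty or a single vertex $v_0$, the only possible obstruction to $B_X\subseteq\Omega(M)\cup\Omega(M)^{\mathrm{fin}}_\infty$ comes from $v_0$, whether $v_0\in B_X$ being controlled by whether $|s^{-1}(v_0)\cap r^{-1}(\bigcup X)|$ is zero, finite and positive, or infinite. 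Unwinding these --- and using the small but essential fact that $\Omega(M)^\emptyset_\infty=\{v_0\}$ forces $M=\{w\in E^0:w\ge v_0\}$, so that $v_0\in\bigcup X$ already entails $M\in X$ and the ``zero'' case cannot occur with $M\notin X$ --- produces exactly the alternatives of condition~\ref{gabe:HSthm1}, and the same analysis (with a few analogous ``this case cannot occur'' observations) gives parts (2), (4), (5), (6). Parts (3) and (6) are quickest: $R_{N,z}\supseteq I_S$ collapses to $H_S\subseteq\Omega(N)$ because $R_{N,z}\supseteq J_{\Omega(N),\Omega(N)^{\mathrm{fin}}_\infty}$, so the circle coordinate plays no role.

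The genuinely delicate case --- and where \cite{gabe:HS} slips --- is $S=\{(U,t):U\in Y,\ t\in D(U)\}$, for which $I_S=\bigcap R_{U,t}$ need not be gauge-invariant. The crux is a description of $I_S$, resting on three observations: (i) a path from $L^0_U$ to $L^0_{U'}$ with $U\ne U'$ is the same as $U\subsetneq U'$ and forces $R_{U,t}\supseteq R_{U',s}$ for all $t,s$, so every $R_{U,t}$ with $U$ contained in a tail of $Y_{\mathrm{min}}$ is redundant, leaving the intersection over $Y_{\mathrm{min}}\cup Y_\infty$; (ii) the subintersection over $Y_\infty$, being over a family of tails with no largest member, washes out the circle data and is gauge-invariant with hereditary saturated part $E^0\setminus\bigcup Y_\infty$; (iii) on each block indexed by $U\in Y_{\mathrm{min}}$, which is Morita equivalent to $C(\mathbb T)$, the intersection $\bigcap_{t\in D(U)}R_{U,t}$ is the ideal attached to the \emph{closed} set $\overline{D(U)}$ --- which is exactly why $\overline{D(N)}$, not $D(N)$, appears in condition~\ref{gabe:HSthm9}(iii). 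With $I_S$ so described, and using that every primitive ideal is prime, $P\supseteq I_S$ unwinds into the stated alternatives: the $\bigcup Y_\infty$-clauses come from the gauge-invariant part of $I_S$ (handled as in the second paragraph, $\Omega(M)^\emptyset_\infty$ correction included), the $\bigcup Y_{\mathrm{min}}$-clauses from its behaviour over $Y_{\mathrm{min}}$, and the condition $z\in\overline{D(N)}$ only enters when $P=R_{N,z}$ with $N\in Y_{\mathrm{min}}$; this yields parts (7), (8), (9).

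The main obstacle is precisely the description of $I_S$ in this non-gauge-invariant case: one must justify carefully that the non-$Y_{\mathrm{min}}$ tails drop out, that cofinal families of the $R_{N,t}$ collapse to gauge-invariant ideals, and that on a $C(\mathbb T)$-block the intersection sees only $\overline{D(U)}$ --- each of which uses the explicit structure of the $R_{N,t}$ near their generating loops and requires keeping $\bigcup Y$, $\bigcup Y_{\mathrm{min}}$ and $\bigcup Y_\infty$ carefully distinguished, as they need not coincide. One must also propagate the single-vertex combinatorics of $\Omega(M)^\emptyset_\infty$ through all nine parts. Everything else is bookkeeping with \cite[Theorem~3.6]{gabe:BHRS}. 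A side benefit of running the argument at this level of care is that it isolates exactly the oversight in \cite{gabe:HS}, which is recorded in Remark~\ref{gabe:HSmistake}.
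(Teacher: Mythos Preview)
Your outline is essentially the approach of Hong--Szyma\'nski \cite{gabe:HS}, and the paper itself does not supply an independent proof: Theorem~\ref{gabe:HSthm} is stated as a quotation of \cite{gabe:HS} with a correction, and the only argument the paper offers is Remark~\ref{gabe:HSmistake}, which pinpoints the error in the original proof and states what the corrected criterion should be. So your proposal is not so much a different route as a fleshed-out version of the route the paper only gestures at.

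One point worth correcting in your write-up: you locate the slip in \cite{gabe:HS} in the non-gauge-invariant case $S=\{(U,t):U\in Y,\ t\in D(U)\}$. That is not where it is. As Remark~\ref{gabe:HSmistake} explains, the error is in the combinatorial translation of ``$w\notin K^{\mathrm{fin}}_\infty$'' for the unique vertex $w\in\Omega(M)^\emptyset_\infty$ (and the analogous vertex for a breaking vertex $v$): Hong--Szyma\'nski assert this is equivalent to $|s^{-1}(w)\cap r^{-1}(K)|<\infty$, whereas the correct condition is $|s^{-1}(w)\cap r^{-1}(E^0\setminus K)|=\infty$. This affects the gauge-invariant cases just as much as the $Y$-case --- it is exactly the ``$\Omega(M)^\emptyset_\infty$ correction'' you mention in your second paragraph, and it is what produces the conditions in parts (1), (2), (4), (5), (7), (8) in the form stated here rather than in the form originally given in \cite{gabe:HS}. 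Your handling of the $Y_{\mathrm{min}}/Y_\infty$ decomposition and the $\overline{D(N)}$ appearance in part (9) is fine, but that part of \cite{gabe:HS} was already correct; the delicacy there is real, just not the locus of the mistake.
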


\begin{remark}\label{gabe:HSmistake}
The minor mistake in the original proof of Theorem \ref{gabe:HSthm} is an error which occurs in the proofs of Lemma 3.3 and Theorem 3.4 of \cite{gabe:HS}. We will explain what goes wrong. Suppose that $M$ is a maximal tail, $K$ is a hereditary and saturated set such that $K \subseteq \Omega(M)$, and that $B\subseteq K^\mathrm{fin}_\infty$. Note that $B \backslash \Omega(M)^\emptyset_\infty \subseteq \Omega(M) \cup \Omega(M)^\mathrm{fin}_\infty$. Hence if $w\in \Omega(M)^\emptyset_\infty$ then $J_{K,B} \subseteq J_{\Omega(M),\Omega(M)^\mathrm{fin}_\infty}$ if and only $w \notin B$, since $w\notin \Omega(M)\cup \Omega(M)^\mathrm{fin}_\infty$. In the cases we consider we have that $w\in B$ if and only if $w\in K^\mathrm{fin}_\infty$. Now it is claimed that $w \notin K^\mathrm{fin}_\infty$ if and only $s^{-1}(w) \cap r^{-1}(K)$ is finite. However, this is not the case. If both $s^{-1}(w) \cap r^{-1}(K)$ and $s^{-1}(w) \cap r^{-1}(E^0\backslash K)$ are infinite then $w\notin K^\mathrm{fin}_\infty$. The correct statement would be that $w\notin K^\mathrm{fin}_\infty$ if and only if $|s^{-1}(w) \cap r^{-1}(E^0\backslash K)|= \infty$.

A similar thing occurs in the case where $v\in BV(E)$. Here we have, in the cases we consider, that $J_{K,B} \subseteq J_{\Omega(v),\Omega(v)^\mathrm{fin}_\infty\backslash \{ v \}}$ if and only if $v \notin K^\mathrm{fin}_\infty$. Again, the correct statement becomes $v\notin K^\mathrm{fin}_\infty$ if and only if $|s^{-1}(v) \cap r^{-1}(E^0 \backslash K) | = \infty$.

After changing these minor mistakes, one obtains Theorem \ref{gabe:HSthm} above.
\end{remark}

\section{$T_1$ primitive ideal space}\label{gabe:sec:t1}

Recall that a topological space is said to satisfy the separation axiom $T_1$ if every one-point set is closed. In particular, every Hausdorff space is a $T_1$ space. For a $C^\ast$-algebra $A$ the primitive ideal space $\Prim A$ is $T_1$ exactly if every primitive ideal is a maximal ideal. All of our ideals are assumed to be two-sided and closed.

As shown in \cite{gabe:BHRS}, every gauge-invariant primitive ideal of a graph $C^\ast$-algebra may be represented by a maximal tail or by a breaking vertex. The following lemma shows that we only need to consider maximal tails.

\begin{lemma}\label{gabe:nobreaking}
Let $E$ be a graph such that $\Prim(C^\ast(E))$ is $T_1$. Then $E$ has no breaking vertices.
\end{lemma}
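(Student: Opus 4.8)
The plan is to argue by contradiction: assuming $E$ has a breaking vertex $v$, I will produce a primitive ideal of $C^\ast(E)$ which is not maximal. This is impossible once $\Prim C^\ast(E)$ is $T_1$, since, as recalled at the beginning of this section, $\Prim A$ is $T_1$ if and only if every primitive ideal of $A$ is a maximal ideal.

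Concretely, by the Hong--Szymański bijection recalled above, the breaking vertex $v$ corresponds to the primitive ideal $P := J_{\Omega(v),\,\Omega(v)^{\mathrm{fin}}_\infty\setminus\{v\}}$. The first step is to use the remark (already in the preliminaries) that an infinite emitter $v$ is a breaking vertex precisely when $v\in\Omega(v)^{\mathrm{fin}}_\infty$; hence $\Omega(v)^{\mathrm{fin}}_\infty\setminus\{v\}$ is a proper subset of $\Omega(v)^{\mathrm{fin}}_\infty$. The second step is to invoke the lattice isomorphism $(H,B)\mapsto J_{H,B}$ of \cite[Theorem 3.6]{gabe:BHRS}: holding the hereditary and saturated set $\Omega(v)$ fixed and enlarging the second coordinate from $\Omega(v)^{\mathrm{fin}}_\infty\setminus\{v\}$ to $\Omega(v)^{\mathrm{fin}}_\infty$ gives a strict inclusion of ideals $P\subsetneq J_{\Omega(v),\,\Omega(v)^{\mathrm{fin}}_\infty}=:Q$. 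The third step is simply to note that $Q$ is a proper ideal: by definition of $\Omega$ we have $v\notin\Omega(v)$, so $\Omega(v)\neq E^0$ and therefore $Q\neq C^\ast(E)$. Chaining these, $P$ is a primitive ideal with $P\subsetneq Q\subsetneq C^\ast(E)$, so $P$ is not maximal, which is the desired contradiction.

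I do not expect a genuine obstacle here. The only point that needs a little care is the \emph{direction} of the inclusion $P\subseteq Q$, so I would appeal explicitly to the order-isomorphism part of \cite[Theorem 3.6]{gabe:BHRS} (for fixed $H$, the assignment $B\mapsto J_{H,B}$ is injective and order-preserving) rather than merely to the underlying set bijection. One could equally phrase the conclusion topologically --- the strict inclusion $P\subsetneq Q$ means the one-point set $\{P\}\subseteq\Prim C^\ast(E)$ is not closed --- but the ``primitive $\Rightarrow$ maximal'' formulation is the most direct route to the stated lemma.
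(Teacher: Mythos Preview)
Your argument is correct and essentially coincides with the paper's own proof: both produce the strict inclusion $J_{\Omega(v),\Omega(v)^{\mathrm{fin}}_\infty\setminus\{v\}}\subsetneq J_{\Omega(v),\Omega(v)^{\mathrm{fin}}_\infty}$ to witness a non-maximal primitive ideal (equivalently, a non-closed point). The only cosmetic difference is that the paper cites \cite[Corollary~3.10]{gabe:BHRS} directly for the strict inclusion and also observes that the larger ideal is itself primitive, whereas you extract the inclusion from the lattice isomorphism of \cite[Theorem~3.6]{gabe:BHRS} and only use that the larger ideal is proper; either route suffices.
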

\begin{proof}
Suppose $E$ has a breaking vertex $v$. Then 
\[
J_{\Omega(v),\Omega(v)_\infty^\mathrm{fin}\backslash\{v\}} \text{ and } J_{\Omega(v),\Omega(v)_\infty^\mathrm{fin}}
\]
are primitive ideals of $C^\ast(E)$, the former being a proper ideal of the latter by \cite[Corollary 3.10]{gabe:BHRS}. Hence
\[
J_{\Omega(v),\Omega(v)_\infty^\mathrm{fin}} \in \overline{\{J_{\Omega(v),\Omega(v)_\infty^\mathrm{fin}\backslash\{v\}}\}}
\]
and thus $C^\ast(E)$ can not have a $T_1$ primitive ideal space.
\end{proof}

It turns out that it might be helpful to consider gauge-invariant ideals which are maximal in the following sense.

\begin{definition}
Let $E$ be a countable directed graph and let $J$ be a proper ideal of $C^\ast(E)$. We say that $J$ is a \emph{maximal gauge-invariant ideal} if $J$ is gauge-invariant and if $J$ and $C^\ast(E)$ are the only gauge-invariant ideals containing $J$.
\end{definition}

The following theorem gives a complete description of the graphs whose induced $C^\ast$-algebras have a $T_1$ primitive ideal space.

\begin{theorem}\label{gabe:t1}
Let $E$ be a countable directed graph. The following are equivalent.
\begin{enumerate}
\item[(1)] $C^\ast(E)$ has a $T_1$ primitive ideal space,
\item[(2)] $E$ has no breaking vertices, and whenever $M$ and $N$ are maximal tails such that $M$ is a proper subset of $N$, then $\Omega(M)^\emptyset_\infty$ is non-empty, and
\[
|s^{-1}(\Omega(M)^\emptyset_\infty ) \cap r^{-1}(N) | < \infty,
\]
\item[(3)] $E$ has no breaking vertices, and $J_{\Omega(M),\Omega(M)^\mathrm{fin}_\infty}$ is a maximal gauge-invariant ideal in $C^\ast(E)$ for any maximal tail $M$,
\item[(4)] $E$ has no breaking vertices, and the map $M \mapsto J_{\Omega(M),\Omega(M)^\mathrm{fin}_\infty}$ is a bijective map from the set of maximal tails of $E$ onto the set of all maximal gauge-invariant ideals of $C^\ast(E)$.
\end{enumerate}
\end{theorem}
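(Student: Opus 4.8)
The plan is to establish the cycle of implications $(1)\Rightarrow(2)\Rightarrow(3)\Rightarrow(4)\Rightarrow(1)$. Lemma \ref{gabe:nobreaking} already gives ``$E$ has no breaking vertices'' from (1), and this clause is part of (2), (3) and (4); so I work throughout under the standing hypothesis $BV(E)=\emptyset$, which by the Hong--Szymański bijection means $\Prim C^\ast(E)=\{J_{\Omega(M),\Omega(M)^\mathrm{fin}_\infty}:M\in\mathscr M_\gamma(E)\}\sqcup\{R_{N,t}:N\in\mathscr M_\tau(E),\ t\in\mathbb T\}$. Two elementary observations about maximal tails will be used repeatedly: first, $\Omega(N)^\emptyset_\infty=\emptyset$ whenever $N\in\mathscr M_\tau(E)$, since a vertex in $\Omega(N)^\emptyset_\infty$ lies in $N$ and hence reaches the generating loop of $N$, yet emits every edge into the hereditary set $E^0\setminus N$, so the first edge of such a path would already leave $N$; second, if $M\subsetneq N$ are maximal tails with $\Omega(M)^\emptyset_\infty=\{v_0\}$ then $v_0$ emits at least one edge into $N$, by a similar argument using that $E^0\setminus M$ is hereditary and that $N$ is downward directed.

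$(1)\Rightarrow(2)$: the space $\Prim C^\ast(E)$ is $T_1$ exactly when no primitive ideal lies in the closure of a different primitive ideal, so I apply Theorem \ref{gabe:HSthm} with each ``set'' of maximal tails taken to be a single point. Parts \eqref{gabe:HSthm1} and \eqref{gabe:HSthm3} give the closure of a singleton $\{J_{\Omega(M'),\Omega(M')^\mathrm{fin}_\infty}\}$, while parts \eqref{gabe:HSthm7} and \eqref{gabe:HSthm9}, applied with $Y=\{N'\}$ --- so that $Y_\mathrm{min}=\{N'\}$ and $Y_\infty=\emptyset$, the latter because $L^0_{N'}$ reaches itself --- give the closure of a singleton $\{R_{N',t'}\}$. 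Reading off the conditions, the space fails to be $T_1$ precisely when either (a) some $M\in\mathscr M_\gamma(E)$ is properly contained in a maximal tail $N$ with $\Omega(M)^\emptyset_\infty=\emptyset$ or $|s^{-1}(\Omega(M)^\emptyset_\infty)\cap r^{-1}(N)|=\infty$, or (b) some $N\in\mathscr M_\tau(E)$ is properly contained in a maximal tail. By the first elementary observation, clause (b) is precisely the failure of (2) on a pair $M\subsetneq N$ with $M\in\mathscr M_\tau(E)$ (the requirement ``$\Omega(M)^\emptyset_\infty$ non-empty'' is already violated), and the failure of (a) is exactly (2) restricted to pairs with $M\in\mathscr M_\gamma(E)$; hence (1) is equivalent to (2).

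$(2)\Rightarrow(3)\Rightarrow(4)$: here I pass to the lattice of admissible pairs $(H,B)$ of \cite{gabe:BHRS}, in which $J_{H_1,B_1}\subseteq J_{H_2,B_2}$ iff $H_1\subseteq H_2$ and $B_1\subseteq H_2\cup B_2$. A direct computation with this order shows that a gauge-invariant ideal lies strictly between $J_{\Omega(M),\Omega(M)^\mathrm{fin}_\infty}$ and $C^\ast(E)$ if and only if there is a non-empty $K\subsetneq M$ with $E^0\setminus K$ hereditary and saturated and with $0<|s^{-1}(v)\cap r^{-1}(K)|<\infty$ for every $v\in\Omega(M)^\mathrm{fin}_\infty\cap K$. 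Since $C^\ast(E)/J_{E^0\setminus K}\ne0$, such a $K$ contains a maximal tail $M'\subsetneq M$, and one then shows --- using (2) for the pair $M'\subsetneq M$, the second elementary observation, and the fact that $\Omega(M')^\emptyset_\infty$ has at most one element --- that no such $K$ can exist; this is $(2)\Rightarrow(3)$. For $(3)\Rightarrow(4)$: the assignment $M\mapsto J_{\Omega(M),\Omega(M)^\mathrm{fin}_\infty}$ lands among the maximal gauge-invariant ideals by (3) and is injective because $\Omega(M)$ determines $M$; for surjectivity, a maximal gauge-invariant ideal $I$ has gauge-simple quotient $C^\ast(E)/I$, and the classification of graph $C^\ast$-algebras with no proper non-zero gauge-invariant ideals shows this quotient is either simple --- in which case $I$ is a gauge-invariant primitive ideal, so $I=J_{\Omega(M),\Omega(M)^\mathrm{fin}_\infty}$ for some $M\in\mathscr M_\gamma(E)$ --- or of the form $M_k(C(\mathbb T))$ (or $\mathbb K\otimes C(\mathbb T)$) arising from a loop without exit, whence $I=J_{\Omega(M),\Omega(M)^\mathrm{fin}_\infty}$ for the corresponding $M\in\mathscr M_\tau(E)$.

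$(4)\Rightarrow(1)$: assuming (4), every $J_{\Omega(M),\Omega(M)^\mathrm{fin}_\infty}$ is maximal gauge-invariant, so $C^\ast(E)/J_{\Omega(M),\Omega(M)^\mathrm{fin}_\infty}$ is gauge-simple; if $M\in\mathscr M_\gamma(E)$ it also satisfies condition (L) and is therefore simple, so $J_{\Omega(M),\Omega(M)^\mathrm{fin}_\infty}$ is a maximal ideal, while if $N\in\mathscr M_\tau(E)$ the quotient is $M_k(C(\mathbb T))$- or $\mathbb K\otimes C(\mathbb T)$-like and $C^\ast(E)/R_{N,t}$, being the quotient of it by evaluation at $t$, is $M_k(\mathbb C)$ (or $\mathbb K$) and in particular simple, so $R_{N,t}$ is maximal. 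Thus every primitive ideal of $C^\ast(E)$ is maximal, i.e.\ $\Prim C^\ast(E)$ is $T_1$. I expect the main obstacle to be the algebraic chain $(2)\Rightarrow(3)\Rightarrow(4)$: the delicate part of $(2)\Rightarrow(3)$ is the control of the $B$-coordinate of the pairs $(H,B)$ --- equivalently, of the ``breaking-vertex-like'' infinite emitters comprising $\Omega(M)^\emptyset_\infty$ and $\Omega(M)^\mathrm{fin}_\infty$ --- which is exactly what condition (2) is designed to govern and what makes a proper inclusion of maximal tails fail, in general, to lift to an inclusion of the associated ideals; and the surjectivity step in $(3)\Rightarrow(4)$ relies on the structure theory of gauge-simple graph $C^\ast$-algebras. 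By comparison $(1)\Leftrightarrow(2)$ is essentially a bookkeeping exercise with Theorem \ref{gabe:HSthm}.
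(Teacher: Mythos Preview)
Your treatment of $(1)\Leftrightarrow(2)$ is essentially the paper's own argument: both amount to specialising Theorem~\ref{gabe:HSthm} to singleton sets and reading off the four cases. Your route through $(3)\Rightarrow(4)\Rightarrow(1)$ is genuinely different from the paper's --- the paper passes to the desingularisation $F$ and works entirely in the row-finite setting, while you stay in $E$ and appeal to the structure of gauge-simple graph algebras (condition~(L) forcing simplicity, and the $M_k(C(\mathbb T))$ picture for $\mathscr M_\tau$). That alternative is sound: once $J_{\Omega(M),\Omega(M)^\mathrm{fin}_\infty}$ is maximal gauge-invariant, the quotient is $C^\ast(E_M)$ with no nontrivial saturated hereditary subsets, and the dichotomy $\mathscr M_\gamma$ versus $\mathscr M_\tau$ then yields maximality of every primitive ideal as you describe.

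However, your direct argument for $(2)\Rightarrow(3)$ has a real gap. You correctly reduce to the existence of a non-empty $K\subsetneq M$ with $H=E^0\setminus K$ hereditary, saturated, and $\Omega(M)^\mathrm{fin}_\infty\cap K\subseteq H^\mathrm{fin}_\infty$; you then pick a maximal tail $M'\subseteq K$ and obtain $v_0:=\Omega(M')^\emptyset_\infty$. Your own second elementary observation forces $v_0\in\Omega(M)^\mathrm{fin}_\infty\cap K$, and the admissibility condition on $K$ then forces $v_0\in H^\mathrm{fin}_\infty$, i.e.\ $v_0$ \emph{does} emit (finitely many) edges into $K$ --- necessarily into $K\setminus M'$. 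At this point nothing you have invoked produces a contradiction: all you know is $M'\subsetneq K$ strictly, and iterating the construction with another maximal tail through one of those targets just reproduces the same situation. The uniqueness of $\Omega(M')^\emptyset_\infty$ does not help, because the obstruction is not a second infinite emitter but the single $v_0$ landing in $H^\mathrm{fin}_\infty$ rather than $H^\emptyset_\infty$. This is precisely the subtlety that the paper sidesteps by desingularising: in the row-finite graph $F$ one has $\Omega(M')^\emptyset_\infty=\emptyset$ for every maximal tail, so condition~(2) collapses to ``$M'\subseteq M\Rightarrow M'=M$'', and then the chain $\Omega(M)\subseteq H\subseteq\Omega(M')$ forces $H=\Omega(M)$ immediately. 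Without desingularisation (or an equivalent reduction), you need an additional idea to rule out the possibility that $v_0$ emits into $K\setminus M'$; as written, the step ``one then shows that no such $K$ can exist'' is not justified.
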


The last condition in (2) of the theorem may look complicated but it is easy to describe. It says, that if $M\subsetneq N$ are maximal tails then $M$ must contain an infinite emitter $v$ which only emits edges out of $M$, and only emits finitely many edges to $N$. Note that this is equivalent to $v\in \Omega(N)^\mathrm{fin}_\infty$. 

\begin{proof}
We start by proving $(1) \Leftrightarrow (2)$. By Lemma \ref{gabe:nobreaking} we may restrict to the case where $E$ has no breaking vertices. The proof is just a translation of Theorem \ref{gabe:HSthm} into our setting. We have four cases.

Case 1: Let $M,N \in \mathscr M_\gamma(E)$. By Theorem \ref{gabe:HSthm}.\ref{gabe:HSthm1} we have $M\in \overline{\{N\}}$ if and only if one of the following three holds: $(i)$ $M=N$, $(ii)$ $M\subsetneq N$ and $\Omega(M)^\emptyset_\infty = \emptyset$, $(iii)$ $M \subsetneq N$, $\Omega(M)^\emptyset_\infty\neq \emptyset$ and
\[
|s^{-1}(\Omega(M)_\infty^\emptyset) \cap r^{-1}(N)|= \infty.
\]
We eliminate the possibilities $(ii)$ and $(iii)$ exactly by imposing the conditions in (2).

Case 2: Let $(M,z) \in \mathscr M_\tau(E) \times \mathbb T$ and $N \in \mathscr M_\gamma(E)$. By Theorem \ref{gabe:HSthm}.\ref{gabe:HSthm3}, $(M,z)\in \overline{\{N\}}$ if and only if $M\subseteq N$. Since $M\in \mathscr M_\tau(E)$ it follows that $\Omega(M)^\emptyset_\infty = \emptyset$ and thus the conditions in (2) says $M \nsubseteq N$.

Case 3: Let $(N,t) \in \mathscr M_\tau(E) \times \mathbb T$ and $M\in \mathscr M_\gamma(E)$. Note that $\{ N\}_{\min} = \{ N\}$ and $\{N\}_\infty = \emptyset$. By Theorem \ref{gabe:HSthm}.\ref{gabe:HSthm7} we have $M\in \overline{\{(N,t)\}}$ if and only if one of the following two holds: $(i)$ $M\subseteq N$ and $\Omega(M)^\emptyset_\infty = \emptyset$, $(ii)$ $M\subseteq N$, $\Omega(M)^\emptyset_\infty\neq \emptyset$  and
\[
|s^{-1}(\Omega(M)_\infty^\emptyset) \cap r^{-1}(N)| = \infty.
\]
Conditions $(i)$ and $(ii)$ do not hold exactly when assuming the conditions of (2).

Case 4: Let $(M,z),(N,t)\in \mathscr M_\tau(E) \times \mathbb T$. By Theorem \ref{gabe:HSthm}.\ref{gabe:HSthm9} we have $(M,z)\in \overline{\{(N,t)\}}$ if and only if either $M\subsetneq N$, or $M=N$ and $z=t$. Note that condition $9(i)$ of the theorem can never be satisfied. Since the maximal tail $M$ satisfies $\Omega(M)^\emptyset_\infty = \emptyset$ the conditions of (2) say $M\subseteq N$ if and only if $M=N$ thus finishing $(1)\Leftrightarrow (2)$.

We will prove $(1) \Rightarrow (3)$. In order to simplify matters, we replace $E$ with its desingulisation $F$ (see \cite{gabe:DT}) thus obtaining a row-finite graph without sinks. Since $E$ has no breaking vertices by Lemma \ref{gabe:nobreaking}, there is a canonical one-to-one correspondence between $\mathscr M(E)$ and $\mathscr M(F)$ and a lattice isomorphism between the ideal lattices of $C^\ast(E)$ and $C^\ast(F)$ such that $M'\mapsto M$ implies $J_{\Omega(M'),\Omega(M')^\mathrm{fin}_\infty}\mapsto J_{\Omega(M)}$. In this case $J_{\Omega(M'),\Omega(M')^\mathrm{fin}_\infty}$ is a maximal gauge-invariant ideal if and only if $J_{\Omega(M)}$ is a maximal gauge-invariant ideal and thus it suffices to prove that $J_{\Omega(M)}$ is a maximal gauge-invariant ideal in $C^\ast(F)$ for $M\in \mathscr M(F)$. 

Suppose $J_{\Omega(M)} \subseteq J_H$ for some hereditary and saturated set $H\neq F^0$. Since $F$ is row-finite without sinks we may find an infinite path $\alpha$ in $F\backslash H$. Let
\[
N = \{ v \in F : v \geq s(\alpha_j) \textrm{ for some } j\}
\]
which is a maximal tail such that $N \subseteq F^0 \backslash H$. Hence $\Omega(M) \subseteq H \subseteq \Omega(N)$ which implies $N\subseteq M$. Since $F$ is row-finite, $\Omega(N)^\emptyset_\infty$ is empty, and thus since $(1) \Leftrightarrow (2)$, $M=N$. Hence $H=\Omega(M)$ and thus $(1) \Rightarrow (3)$.

We will prove $(3)\Rightarrow (4)$. Again, we let $F$ be the desingulisation of $E$ and note that $(4)$ holds for $F$ if and only if it holds for $E$. Note that $(3)$ implies that the map in $(4)$ is well-defined, and this is clearly injective. Let $H$ be a hereditary and saturated set in $F$ such that $J_H$ is a maximal gauge-invariant ideal in $C^\ast(F)$. As above, we may find a maximal tail $M$ such that $H\subseteq \Omega(M)$ which implies $J_H \subseteq J_{\Omega(M)}$. Since $J_H$ is a maximal gauge-invariant ideal, $H= \Omega(M)$ which proves surjectivity of the map and finishes $(3) \Rightarrow (4)$.

For $(4) \Rightarrow (1)$ we may again replace $E$ by its desingulisation $F$. Since $(1)\Leftrightarrow (2)$ and $F$ is row-finite, $(1)$ is equivalent to the following: if $M\subseteq N$ are maximal tails then $M=N$, since $\Omega(M)^\emptyset_\infty = \empty$ for every maximal tail $M$. Let $M\subseteq N$ be maximal tails in $F$. Then $J_{\Omega(N)}\subseteq J_{\Omega(M)}$ are maximal gauge-invariant ideals and thus $N=M$, which finishes the proof.
\end{proof}

\begin{definition}
Let $E$ be a countable directed graph. If $E$ satisfies one (and hence all) of the conditions in Theorem \ref{gabe:t1}, then we say that $E$ is a \emph{$T_1$ graph}.
\end{definition}

For row-finite graphs the above theorem simplifies significantly.

\begin{corollary}\label{gabe:t1rf}
Let $E$ be a row-finite graph. The following are equivalent.
\begin{enumerate}
\item[(1)] $E$ is a $T_1$ graph,
\item[(2)] if $M\subseteq N$ are maximal tails, then $M=N$,
\item[(3)] $J_{\Omega(M)}$ is a maximal gauge-invariant ideal in $C^\ast(E)$ for any maximal tail $M$,
\item[(4)] the map $M \mapsto J_{\Omega(M)}$ is a bijective map from the set of maximal tails of $E$ onto the set of all maximal gauge-invariant ideals of $C^\ast(E)$,
\end{enumerate}
\end{corollary}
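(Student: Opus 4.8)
The plan is to obtain the corollary as a direct specialisation of Theorem \ref{gabe:t1}, exploiting the fact that row-finiteness removes every phenomenon in that theorem which involves infinite emitters. First I would record the elementary observations: since $E$ is row-finite, $|s^{-1}(v)|<\infty$ for every $v\in E^0$, so $BV(E)=\emptyset$, and for every hereditary and saturated set $H$ we have $H^\mathrm{fin}_\infty = H^\emptyset_\infty = \emptyset$. In particular, for any maximal tail $M$ one has $\Omega(M)^\emptyset_\infty = \emptyset$ and $\Omega(M)^\mathrm{fin}_\infty = \emptyset$, whence $J_{\Omega(M),\Omega(M)^\mathrm{fin}_\infty} = J_{\Omega(M)}$.

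With these identifications in hand, conditions $(3)$ and $(4)$ of the corollary are literally conditions $(3)$ and $(4)$ of Theorem \ref{gabe:t1} (the clause ``$E$ has no breaking vertices'' being automatically satisfied), so the equivalences $(1)\Leftrightarrow(3)$ and $(1)\Leftrightarrow(4)$ are immediate from the theorem and the definition of a $T_1$ graph. For $(1)\Leftrightarrow(2)$ I would unwind condition $(2)$ of Theorem \ref{gabe:t1}: it asserts that $E$ has no breaking vertices — automatic here — and that for all maximal tails $M\subsetneq N$ the set $\Omega(M)^\emptyset_\infty$ is non-empty and $|s^{-1}(\Omega(M)^\emptyset_\infty)\cap r^{-1}(N)|<\infty$. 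Since $\Omega(M)^\emptyset_\infty=\emptyset$ always, the conclusion of this implication can never hold, so the implication is valid precisely when its hypothesis is never satisfied, i.e.\ precisely when there is no pair of maximal tails $M\subsetneq N$. That is exactly condition $(2)$ of the corollary, which gives $(1)\Leftrightarrow(2)$ and finishes the proof.

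There is no genuine obstacle here; the only point requiring a moment's care is the logical reading of the clause in Theorem \ref{gabe:t1}$(2)$ — it is a universally quantified implication whose conclusion is vacuously unattainable in the row-finite setting, so it collapses to the statement that the maximal tails of $E$ admit no strict inclusions. (If one preferred to avoid Theorem \ref{gabe:t1} altogether, the equivalence $(1)\Leftrightarrow(2)$ can be read off directly from Theorem \ref{gabe:HSthm} as in Case $4$ of its proof, and the remaining equivalences from the lattice isomorphism of \cite{gabe:BHRS}, arguing as in the proof of $(1)\Rightarrow(3)$ above.)
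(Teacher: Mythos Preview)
Your proposal is correct and follows essentially the same approach as the paper: the paper's proof simply notes that row-finiteness forces $BV(E)=\emptyset$ and $\Omega(M)^\emptyset_\infty=\emptyset$ for every maximal tail $M$, and then says ``Hence it follows from Theorem~\ref{gabe:t1}.'' You have merely unpacked this terse sentence, in particular making explicit the vacuous-implication argument that collapses condition~(2) of Theorem~\ref{gabe:t1} to condition~(2) of the corollary.
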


\begin{proof}
Since $E$ is row-finite it has no breaking vertices and $\Omega(M)^\emptyset_\infty$ is empty for any maximal tail $M$. Hence it follows from Theorem \ref{gabe:t1}.
\end{proof}

We will end this section by constructing a class of graph $C^\ast$-algebras, all of which have a non-discrete $T_1$ primitive ideal space.

\begin{example}\label{gabe:exmclass}
Let $B$ be a simple AF-algebra and let $F$ be a Bratteli diagram of $B$ as in \cite{gabe:Drinen}, such that the vertex set $F^0$ is partitioned into vertex sets $F^0_n = \{ w_n^1 , \dots ,w^{k_n}_n\}$ and every edge with a source in $F^0_n$ has range in $F^0_{n+1}$. Let $G_1,G_2,\dots$ be a sequence of graphs all of which have no non-trivial hereditary and saturated sets. Construct a graph $E$ as follows:
\begin{eqnarray*}
E^0 &=& F^0 \cup \bigcup_{n=1}^\infty G^0_n, \\
E^1 &=& F^1 \cup \bigcup_{n=1}^\infty G^1_n \cup \bigcup_{n=1}^\infty \{ e^1_n, \dots e^{k_n}_n\}
\end{eqnarray*}
where the range and source maps do not change on $F^1 \cup \bigcup_{n=1}^\infty G^1_n$ and where $s(e^j_n) = w^j_n$ and $r(e^j_n)\in G^0_n$.

Using that $F$ and each $G_n$ have no non-trivial hereditary and saturated sets we get that the maximal tails of $E$ are
\begin{eqnarray*}
M_n &=& \bigcup_{k=1}^n F^0_k \cup G^0_n, \\
M_\infty &=& \bigcup_{k=1}^\infty F^0_k = F^0.
\end{eqnarray*}
Hence no maximal tail is contained in another and thus the primitive ideal space of $C^\ast(E)$ is $T_1$. For any of these maximal tails $M$, each vertex in $M$ emits only finitely many edges to $\Omega(M)$ and thus $\Omega(M)^\mathrm{fin}_\infty$ is empty. The quotients $C^\ast(E)/J_{\Omega(M_n)}$ are Morita equivalent $C^\ast(G_n)$ and $C^\ast(E)/J_{\Omega(M_\infty)}=C^\ast(F)$ which is Morita equivalent to $B$.

If, in addition, each $G_n$ has condition (K) then one can verify that $\Prim C^\ast(E)$ is homeomorphic to $\widetilde{\mathbb N}=\mathbb N \cup \{\infty\}$, the one-point compactification of $\mathbb{N}$. Such a homeomorphism may be given by 
\[
\widetilde{\mathbb N} \ni n \mapsto J_{\Omega(M_n)} \in \Prim C^\ast(E).
\]
\end{example}

\section{Clopen maximal gauge-invariant ideals}\label{gabe:sec:clopen}

Whenever a subset of a topological space is both closed and open, then we say that the set is \emph{clopen}. In this section we give a description of which one-point sets in the primitive ideal space of a $T_1$ graph are clopen. In fact, we describe which maximal gauge-invariant ideals in the primitive ideal space correspond to clopen sets. We use this description to show that every purely infinite graph $C^\ast$-algebra with a $T_1$ primitive ideal space is a $c_0$-direct sum of Kirchberg algebras. Moreover, we prove that graph $C^\ast$-algebras with a $T_1$ primitive ideal space are canonically $C(\widetilde{\mathbb N})$-algebras, which are classified up to $E(\widetilde{\mathbb N})$-equivalence by their $\widetilde{\mathbb N}$-filtered $K$-theory.

In order to describe the clopen maximal gauge-invariant ideals, we need a notion of when a maximal tail distinguishes itself from all other maximal ideals in a certain way.
\begin{definition}
Let $E$ be a $T_1$ graph and let $M$ be a maximal tail in $E$. We say that $M$ is \emph{isolated} if either
\begin{enumerate}
\item $M$ contains a vertex which is not contained in any other maximal tail, or
\item $\Omega(M)^\emptyset_\infty$ is non-empty and 
\[
|s^{-1}(\Omega(M)^\emptyset_\infty) \cap r^{-1}\left( \bigcup_{N\in \mathscr M_{M}(E)} N \right) | < \infty.
\]
where $\mathscr M_{M} (E)$ denotes the set of all maximal tails $N$ such that $M\subseteq N$.
\end{enumerate}
\end{definition}

This definition may look strange but it turns out that a maximal tail corresponds to a clopen maximal gauge-invariant ideal if and only if it is isolated, see Theorem \ref{gabe:clopenpoints}.

\begin{remark}\label{gabe:isorf}
For a row-finite $T_1$ graph $E$ the above definition simplifies, since $\Omega(M)^\emptyset_\infty$ is empty for any maximal tail $M$. Hence, in this case, a maximal tail is isolated if and only if it contains a vertex which is not contained in any other maximal tail. 
\end{remark}

\begin{example}
Consider the two graphs
\[
\xymatrix{
& w \ar[dl] \ar[d] \ar[dr] \ar[drr] & & & w_1 \ar[r] \ar[d] & w_2 \ar[r] \ar[d] & w_3 \ar[r] \ar[d] & \cdots. \\
v_1 & v_2 & v_3 & \cdots & v_1 & v_2 & v_3 &
}
\]
The latter graph is the desingulisation of the former but without changing sinks to tails. The maximal tails of the former graph are given by $N_n = \{ w, v_n\}$ and $N_\infty = \{ w\}$. The maximal tails of the latter graph are
\begin{eqnarray*}
M_n &=& \{ w_1,\dots, w_n, v_n\}, \\
M_\infty &=& \{ w_1, w_2, \dots\}.
\end{eqnarray*}
Hence both graphs are easily seen to be $T_1$ graphs. All the maximal tails $N_n$ and $M_n$ for $n\in \mathbb N$ are easily seen to be isolated, and by Remark \ref{gabe:isorf}, $M_\infty$ is not isolated. Since $\Omega(N_\infty)=\{ w \}$ and
\[
|s^{-1}(v) \cap r^{-1} \left( \bigcup_{N\in \mathscr M_{N_\infty}(E)}N \right) | = \infty
\]
we note that $N_\infty$ is not isolated. In fact, by Corollary \ref{gabe:isodesing} below, $N_\infty$ would be isolated if and only if $M_\infty$ was isolated.

The latter graph is an example of a graph in Example \ref{gabe:exmclass}, with $B=\mathbb C$ and each $G_n$ consisting of one vertex and no edges. Since the graph has condition (K), the primitive ideal space is homeomorphic to $\widetilde{\mathbb N} = \mathbb N \cup \{ \infty \}$, the one-point compactification of $\mathbb N$, by the map
\[
\widetilde{\mathbb N} \ni n \mapsto J_{\Omega(M_n)}.
\]
\end{example}

It turns out that many maximal tails are isolated, as can be seen in the following lemma.

\begin{lemma}\label{gabe:maxtailloop}
Let $E$ be a $T_1$ graph and let $M$ be a maximal tail which contains a sink or a loop. Then $M$ is isolated.
\end{lemma}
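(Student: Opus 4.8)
The plan is to show that if $M$ contains a sink or a loop, then $M$ contains a vertex not contained in any other maximal tail, so that condition (1) in the definition of isolated is satisfied. Recall from the preliminaries that for a vertex $v \in E^0$, the set $E^0 \setminus \Omega(v)$ is a maximal tail precisely when $v$ is a sink, an infinite emitter, or the base of a loop. So the natural candidate for the distinguishing vertex is a sink in $M$, or the base vertex of a loop in $M$.

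First I would treat the case where $M$ contains a sink $v$. Since $M$ is hereditary-closed upwards (condition (1) of a maximal tail says $w \in M, u \geq w \Rightarrow u \in M$) and $v$ is a sink, I claim $M = E^0 \setminus \Omega(v) = \{w : w \geq v\}$. Indeed $\{w : w \geq v\} \subseteq M$ by the upward-closure of $M$, and conversely if $w \in M$ then by condition (3) of a maximal tail there is $y \in M$ with $w \geq y$ and $v \geq y$; but $v$ is a sink, so $y = v$, giving $w \geq v$. Now suppose $N$ is another maximal tail containing $v$. The same argument gives $N = \{w : w \geq v\} = M$, so $v$ lies in no maximal tail other than $M$, and $M$ is isolated. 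The case of a loop is essentially the same: if $M$ contains a loop based at $v$, then again $M = \{w : w \geq v\}$ — one inclusion is upward-closure, and for $w \in M$ apply condition (3) to get $y \in M$ with $w \geq y$ and $v \geq y$; since $v$ is the base of a loop, $v \geq v$, and I need to argue $w \geq v$. Here I would use that, having a loop through $v$, once we reach $y$ with $v \geq y$ we can... actually the cleaner route is: by condition (3) applied to the pair $v, w$, and using that there is a path $v \to v$, one shows $w \geq v$ directly, or alternatively invoke that $E \setminus \Omega(v)$ is the unique maximal tail containing $v$ by the remark in the preliminaries. Either way, any maximal tail $N$ containing $v$ equals $\{w : w \geq v\} = M$.

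The one subtlety — and the step I expect to require the most care — is the loop case, specifically verifying that $M$ really equals $\{w : w \geq v\}$ rather than just containing it: I must rule out the possibility that $M$ is strictly larger. The key input is condition (3) (the "downward directedness" of maximal tails) together with the fact that from any vertex $y$ with $v \geq y$ one can route back up to $v$ along the loop only if $y$ lies on the loop; so instead I should use condition (3) on $v$ and $w$ to find a common descendant $y \in M$ with $v \geq y$, and then the loop structure forces, after possibly re-applying (3), that $w$ and $v$ have a common descendant on the loop, whence $w \geq v$. If this direct combinatorial argument turns out awkward, the fallback is simply to cite the preliminaries: since $v$ is a sink or the base of a loop, $E^0 \setminus \Omega(v)$ is a maximal tail, it is the \emph{smallest} maximal tail containing $v$ (being $\{w : w \geq v\}$, contained in every upward-closed set containing $v$), and then $T_1$-ness via Theorem~\ref{gabe:t1}(2) forbids proper containments of maximal tails, forcing $M = E^0\setminus\Omega(v)$ and hence uniqueness. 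This reduces the whole lemma to the already-proved equivalence $(1)\Leftrightarrow(2)$ of Theorem~\ref{gabe:t1}.
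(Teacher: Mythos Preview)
Your fallback route is essentially the paper's proof, but you misquote Theorem~\ref{gabe:t1}(2): it does \emph{not} say that $T_1$ graphs forbid proper containments $M' \subsetneq N$ of maximal tails. It says that if such a proper containment occurs then $\Omega(M')^\emptyset_\infty$ must be non-empty (and satisfy a finiteness condition). So to conclude $E^0\setminus\Omega(v) = N$ for every maximal tail $N$ containing $v$, you must first check that $\Omega(v)^\emptyset_\infty = \emptyset$. This is precisely the observation the paper isolates: if $v$ is a sink it is not an infinite emitter, and if $v$ is the base of a loop then the first edge of that loop lands back in $E^0\setminus\Omega(v)$, so in either case $v\notin\Omega(v)^\emptyset_\infty$; and any other $w\in E^0\setminus\Omega(v)$ has a path to $v$, whose first edge already lands in $E^0\setminus\Omega(v)$. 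With that in hand, Theorem~\ref{gabe:t1}(2) does force $N = E^0\setminus\Omega(v)$, giving uniqueness.

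Your direct combinatorial attempt in the loop case cannot be salvaged without invoking $T_1$-ness: in a general graph one can have a maximal tail $M$ containing a loop-base $v$ with $M \supsetneq \{w:w\geq v\}$ (take $E^0=\{v,y,w\}$ with edges $v\to v$, $v\to y$, $w\to y$; then $M=E^0$ is a maximal tail containing the loop at $v$, but $w\not\geq v$). So the appeal to Theorem~\ref{gabe:t1} is not optional. Your sink argument, by contrast, is cleaner than the paper's uniform treatment: it shows directly, without using $T_1$, that a sink lies in a unique maximal tail.
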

\begin{proof}
Let $v\in M$ be the sink or the base of a loop in $M$, and note that $\Omega(v)^\emptyset_\infty = \emptyset$. If $N$ is a maximal tail such that $v\in N$ then $E^0 \backslash \Omega(v) \subseteq N$ and since $\Omega(v)^\emptyset_\infty$ is empty, $N=E^0\backslash \Omega(v)$ by Theorem \ref{gabe:t1}. Hence $v$ is not contained in any other maximal tail than $M$ and thus $M$ is isolated.
\end{proof}

The following is the main theorem of this section, mainly due to all the corollaries following it.

\begin{theorem}\label{gabe:clopenpoints}
Let $E$ be a countable directed graph for which the primitive ideal space of $C^\ast(E)$ is $T_1$, and let $M$ be a maximal tail in $E$. Then 
\[
\{ \mathfrak p \in \Prim C^\ast(E) : J_{\Omega(M),\Omega(M)^\mathrm{fin}_\infty} \subseteq \mathfrak p \} \subseteq \Prim C^\ast(E)
\]
is a clopen set if and only if $M$ is isolated.

In particular, if $M\in \mathscr M_\gamma (E)$, then the one-point set
\[
\{ J_{\Omega(M),\Omega(M)^\mathrm{fin}_\infty} \} \subseteq \Prim C^\ast(E)
\]
is clopen if and only if $M$ is isolated, and if $M\in \mathscr M_\tau(E)$ then
\[
\{ R_{M,t} : t \in \mathbb T\} \subseteq \Prim C^\ast(E)
\]
is a clopen set homeomorphic to the circle $S^1$.
\end{theorem}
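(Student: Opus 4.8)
Write $J_M:=J_{\Omega(M),\Omega(M)^\mathrm{fin}_\infty}$ and let $U_M:=\{\mathfrak p\in\Prim C^\ast(E):J_M\subseteq\mathfrak p\}$ be the set in the statement. The first observation is that $U_M$ is always closed, being the hull of the ideal $J_M$; equivalently $U_M$ is canonically homeomorphic to $\Prim(C^\ast(E)/J_M)$. So the content is to decide when $U_M$ is \emph{open}. For this I would use the standard fact that the open subset $\{\mathfrak p:I\not\subseteq\mathfrak p\}$ of a primitive ideal space attached to an ideal $I$ is closed if and only if $I$ is a direct summand of the algebra, i.e. $C^\ast(E)=I\oplus\mathrm{Ann}(I)$. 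Since the annihilator of a gauge-invariant ideal is again gauge-invariant, and since $\mathrm{Ann}(J_M)=\bigcap\{\mathfrak p\in\Prim C^\ast(E):J_M\not\subseteq\mathfrak p\}$ is equal to $\bigcap\{J_N:N\in\mathscr M(E),\ J_M\not\subseteq J_N\}$ --- here one uses $\bigcap_t R_{N,t}=J_N$ together with the fact that $J_N$ is the largest gauge-invariant ideal contained in each $R_{N,t}$, which in turn follows from $J_N$ being a maximal gauge-invariant ideal (Theorem \ref{gabe:t1}) --- the whole problem becomes: $U_M$ is clopen if and only if $(\Omega(M),\Omega(M)^\mathrm{fin}_\infty)$ is a complemented element of the lattice of admissible pairs from \cite{gabe:BHRS}.

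Before analysing complementedness I would settle the homeomorphism type of $U_M$, which yields the ``in particular'' assertions. As $J_M$ is a maximal gauge-invariant ideal (Theorem \ref{gabe:t1}), the quotient $C^\ast(E)/J_M$ has no nontrivial gauge-invariant ideals; a graph $C^\ast$-algebra with this property has primitive ideal space consisting of a single point or homeomorphic to a circle (immediate from the Hong--Szymański parametrization applied to the quotient). If $M\in\mathscr M_\gamma(E)$ then $J_M$ is a primitive ideal, hence, $\Prim C^\ast(E)$ being $T_1$, a maximal ideal, so $U_M=\{J_M\}$ is a point. If $M\in\mathscr M_\tau(E)$ then $J_M$ is gauge-invariant but, the $R_{M,t}$ being non-gauge-invariant, does not occur in the Hong--Szymański parametrization, so $J_M$ is not primitive and $U_M$ is not a point; it must therefore be a circle, its points are exactly the $R_{M,t}$, and that the subspace topology on $\{R_{M,t}:t\in\mathbb T\}$ is that of $\mathbb T\cong S^1$ follows from Theorem \ref{gabe:HSthm}.\ref{gabe:HSthm9}(iii). (Since an $M\in\mathscr M_\tau(E)$ contains a loop it is isolated by Lemma \ref{gabe:maxtailloop}, consistent with the unconditional ``clopen'' claim here.)

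It remains to prove that $(\Omega(M),\Omega(M)^\mathrm{fin}_\infty)$ is complemented exactly when $M$ is isolated. I would first do the row-finite case, where every $\Omega(N)^\emptyset_\infty$ and every $H^\mathrm{fin}_\infty$ is empty, the admissible pairs reduce to hereditary saturated sets, and ``isolated'' just means ``$M$ contains a vertex in no other maximal tail'' by Remark \ref{gabe:isorf}. Here $\mathrm{Ann}(J_M)=J_{H_M}$ with $H_M=F^0\setminus\bigcup_{N\ne M}N$ (using Corollary \ref{gabe:t1rf}: no maximal tail is contained in another, so $J_M\subseteq J_N$ forces $N=M$), and complementedness becomes $\Sigma H(H_M\cup\Omega(M))=F^0$, i.e. the hereditary-saturated closure of the complement of the set $P$ of ``non-exclusive'' vertices of $M$ is all of $F^0$. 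If $M$ has no exclusive vertex then $P=M$ and this set is the proper hereditary saturated set $\Omega(M)$; conversely, if $S:=\Sigma H(F^0\setminus P)\ne F^0$, then $F^0\setminus S\subseteq P$ is nonempty, every one of its vertices emits an edge back into it (as $S$ is saturated and $F$ is row-finite without sinks) and every vertex reaching it again lies in it (as $S$ is hereditary), so following edges gives an infinite path inside $F^0\setminus S$ generating a maximal tail $N_\alpha\subseteq F^0\setminus S\subseteq P\subseteq M$, whence $N_\alpha=M$ by Corollary \ref{gabe:t1rf} and $M\subseteq P$, contradicting the exclusive vertex. For a general $T_1$ graph $E$ I would pass to its desingularisation $F$ as in the proof of Theorem \ref{gabe:t1}: the Morita equivalence $C^\ast(E)\sim C^\ast(F)$ sends $U_{M'}$ to $U_M$ for corresponding maximal tails, so all that remains is to see that being isolated is preserved under the maximal-tail correspondence, which is Corollary \ref{gabe:isodesing}.

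The step I expect to be the genuine obstacle is precisely this last transfer through desingularisation --- equivalently, carrying out the lattice-complementedness analysis directly in $E$ while tracking the sets $\Omega(M)^\emptyset_\infty$ and the $B$-components of the pairs: one must trace how an infinite emitter of $E$ (which becomes an infinite tail in $F$) interacts with maximal tails, and see that the hereditary-saturated join in $F$ failing to be everything corresponds back in $E$ either to $M$ having no exclusive vertex or to $\Omega(M)^\emptyset_\infty$ being nonempty with infinitely many edges into $\bigcup_{N\in\mathscr M_M(E)}N$ --- which is exactly why ``isolated'' is defined with two clauses. Everything else --- the two general facts about $\Prim$, the identification of $\mathrm{Ann}(J_M)$, and the homeomorphism type of $U_M$ --- is routine.
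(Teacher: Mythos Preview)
Your reformulation of ``$U_M$ clopen'' as ``$J_M$ is complemented in the ideal lattice'' is correct and in one respect cleaner than the paper: your remark that $\mathrm{Ann}(J_M)$ is automatically gauge-invariant (being the annihilator of a gauge-invariant ideal) replaces the paper's separate uniqueness-under-the-gauge-action argument for the complementing ideal. The row-finite analysis and the ``in particular'' clause are also fine.

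The gap is exactly the one you yourself flag: reducing the general case to the row-finite one via desingularisation requires that ``isolated'' transfer along the maximal-tail correspondence, and you cite Corollary~\ref{gabe:isodesing} for this --- but in the paper that corollary is a \emph{consequence} of Theorem~\ref{gabe:clopenpoints}, so invoking it here is circular. You would need either to prove the transfer directly (tracking how the tail replacing an infinite emitter $w\in\Omega(M)^\emptyset_\infty$ sits inside the maximal tails of $F$, and matching clause~(2) of ``isolated'' in $E$ to clause~(1) in $F$), or to carry out the complementedness analysis directly in $E$ with the $B$-components present. The paper does the latter: it never desingularises in this proof, but writes the complement as $J_{H,B}$ and splits on whether $H\subseteq\Omega(M)$. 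If $H\not\subseteq\Omega(M)$ then any vertex of $H\cap M$ is exclusive; if $H\subseteq\Omega(M)$ then $J_{H,B}\not\subseteq J_M$ forces $\Omega(M)^\emptyset_\infty=\{w\}$ with $w\in B\subseteq H^{\mathrm{fin}}_\infty$, and $\bigcup_{N\in\mathscr M_M(E)}N\subseteq E^0\setminus H$ yields the finiteness clause. For the converse the paper exhibits $U_M$ explicitly as a single open set $\{\mathfrak p:J_{\Sigma H(v)}\not\subseteq\mathfrak p\}$ in the exclusive-vertex case, or as an intersection of two such open sets in the second case. So the two-clause shape of ``isolated'' arises directly from this case split on $H$, not from desingularisation bookkeeping.
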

\begin{proof}
To ease notation define
\begin{eqnarray*}
U_M &:=& \{ \mathfrak p \in \Prim C^\ast(E) : J_{\Omega(M)} \subseteq \mathfrak p \}.
\end{eqnarray*}
By definition $U_M$ is closed. By \cite[Lemma 2.6]{gabe:HS} it follows that if $J$ is a gauge-invariant ideal, $M\in \mathscr M_\tau(E)$ and $t\in \mathbb T$, then $J\subseteq R_{M,t}$ if and only if $J\subseteq J_{\Omega(M),\Omega(M)^\mathrm{fin}_\infty}$. We will use this fact several times throughout the proof, without mentioning it.

Suppose $U_M$ is clopen. If $M \in \mathscr M_\tau(E)$ then $M$ contains a loop and is thus isolated by Lemma \ref{gabe:maxtailloop}. Hence we may suppose $M\in \mathscr M_\gamma(E)$ for which $U_M = \{ J_{\Omega(M),\Omega(M)^\mathrm{fin}_\infty}\}$. Since $U_M$ is open there is a unique ideal $J$ such that
\[
\{ J_{\Omega(M),\Omega(M)^\mathrm{fin}_\infty} \} = \{ \mathfrak p \in \Prim C^\ast(E): J \nsubseteq \mathfrak p\}.
\]
Suppose $J$ is not gauge-invariant. Then we can find a $z \in \mathbb T$ such that $\gamma_z (J) \neq J$. Note that $\gamma_z(J) \nsubseteq \gamma_z(J_{\Omega(M),\Omega(M)^\mathrm{fin}_\infty}) = J_{\Omega(M),\Omega(M)^\mathrm{fin}_\infty}$. For any primitive ideal $\mathfrak p \in \Prim C^\ast(E) \backslash \{ J_{\Omega(M),\Omega(M)^\mathrm{fin}_\infty} \}$, we have $\gamma_z(J) \subseteq \gamma_z(\mathfrak p)$, since $J\subseteq \mathfrak p$. Since $\gamma_z$ fixes $J_{\Omega(M),\Omega(M)^\mathrm{fin}_\infty}$ it induces a bijection from $\Prim C^\ast(E) \backslash \{ J_{\Omega(M),\Omega(M)^\mathrm{fin}_\infty}\}$ to itself and thus $\gamma_z(J) \subseteq \mathfrak p$ for any primitive ideal $\mathfrak p \neq J_{\Omega(M),\Omega(M)^\mathrm{fin}_\infty}$. However, this contradicts the uniqueness of $J$, and thus $J$ must be gauge-invariant.

Since $J$ is gauge-invariant, $J=J_{H,B}$ for a hereditary and saturated set $H$ and $B\subseteq H^\mathrm{fin}_\infty$. If $H\nsubseteq \Omega(M)$ then any vertex $v\in H$ such that $v\in M$ is not contained in any other maximal tail, since $J_{H,B} \subseteq J_{\Omega(N),\Omega(N)^\mathrm{fin}_\infty}$ for any maximal tail $N\neq M$. Hence we may restrict to the case where $H \subseteq \Omega(M)$. Since $J_{H,B} \nsubseteq J_{\Omega(M),\Omega(M)^\mathrm{fin}_\infty}$, $B\nsubseteq \Omega(M) \cup \Omega(M)^\mathrm{fin}_\infty$. It is easily observed that $B \backslash \Omega(M)^\emptyset_\infty \subseteq \Omega(M)\cup \Omega(M)^\mathrm{fin}_\infty$ and hence it follows that $\Omega(M)^\emptyset_\infty = \{ w \}$ for some vertex $w$ and that $w\in B$. Recall that $\mathscr M_M(E) = \{ N\in \mathscr M(E):M\subseteq N\}$. Since $w\in H^\mathrm{fin}_\infty$ we have
\[
| s^{-1}(w) \cap r^{-1} (E^0 \backslash H ) | < \infty
\]
and since $\bigcup_{N\in \mathscr M_M(E)}N \subseteq E^0 \backslash H$ it follows that
\[
| s^{-1}(w) \cap r^{-1} (\bigcup_{N\in \mathscr M_M(E)} N ) | < \infty.
\]
Thus $M$ is isolated.

Now suppose that $M$ is an isolated maximal tail. If $M$ contains a vertex $v$ which is not contained in any other maximal tail, then $J_{\Sigma H(v)} \nsubseteq J_{\Omega(M),\Omega(M)^\mathrm{fin}_\infty}$ and $J_{\Sigma H(v)} \subseteq J_{\Omega(N),\Omega(N)^\mathrm{fin}_\infty}$ for any maximal tail $N\neq M$. Hence
\[
U_M = \{ \mathfrak p \in \Prim C^\ast(E) : J_{\Sigma H(v)} \nsubseteq \mathfrak p\}
\]
and thus $U_M$ is clopen. Now suppose that every vertex of $M$ is contained in some other maximal tail. Let $H = \bigcap_{N\in \mathscr M_M(E)} \Omega(N)$ which is hereditary and saturated. Since $M$ is isolated, $\Omega(M)^\emptyset_\infty = \{ w \}$ for some vertex $w$ and moreover $w \in H^\mathrm{fin}_\infty$. Hence $J_{H,\{ w \}} \nsubseteq J_{\Omega(M),\Omega(M)^\mathrm{fin}_\infty}$ and $J_{H,\{ w \}} \subseteq J_{\Omega(N),\Omega(N)^\mathrm{fin}_\infty}$ for any $N\in \mathscr M_M(E)$ by Theorem \ref{gabe:t1}. Now, as above, $J_{\Sigma H(w)} \subseteq J_{\Omega(N),\Omega(N)^\mathrm{fin}_\infty}$ for any $N\notin \mathscr M_M(E)$ and $J_{\Sigma H(w)} \nsubseteq J_{\Omega(N),\Omega(N)^\mathrm{fin}_\infty}$ for $N\in \mathscr M_M(E)$. Hence
\[
U_M = \{ \mathfrak p : J_{H,\{ w \}} \nsubseteq \mathfrak p \} \cap \{ \mathfrak p : J_{\Sigma H(w)} \nsubseteq \mathfrak p\}
\]
is the intersection of two open sets, and is thus clopen.

For the 'in particular' part note that if $M\in \mathscr M_\gamma(E)$ then $U_M=\{ J_{\Omega(M),\Omega(M)^\mathrm{fin}_\infty }\}$. If $M\in \mathscr M_\tau(E)$ then $M$ contains a loop and is thus isolated by Lemma \ref{gabe:maxtailloop}. Hence
\[
U_M = \{ R_{M,t} : t \in \mathbb T\}
\]
is clopen. By Theorem \ref{gabe:HSthm} it follows that this set is homeomorphic to the circle $S^1$.
\end{proof}

\begin{corollary}\label{gabe:onepointsets}
Let $E$ be a $T_1$ graph and $\mathfrak p \in \Prim C^\ast(E)$ be a primitive ideal. Then $\{ \mathfrak p \}$ is clopen if and only if $\mathfrak p = J_{\Omega(M),\Omega(M)^\mathrm{fin}_\infty}$ for an isolated maximal tail $M \in \mathscr M_\gamma (E)$.
\end{corollary}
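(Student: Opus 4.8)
The plan is to derive Corollary \ref{gabe:onepointsets} directly from Theorem \ref{gabe:clopenpoints} and the structure of $\Prim C^\ast(E)$ described in Theorem \ref{gabe:HSthm}. First I would recall that for a $T_1$ graph $E$, Lemma \ref{gabe:nobreaking} tells us $E$ has no breaking vertices, so by the Hong--Szymański bijection every primitive ideal of $C^\ast(E)$ is either of the form $J_{\Omega(M),\Omega(M)^\mathrm{fin}_\infty}$ for some $M\in\mathscr M_\gamma(E)$, or of the form $R_{N,t}$ for some $N\in\mathscr M_\tau(E)$ and $t\in\mathbb T$. I would treat these two cases separately.

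Suppose first that $\mathfrak p = J_{\Omega(M),\Omega(M)^\mathrm{fin}_\infty}$ for $M\in\mathscr M_\gamma(E)$. As observed in the proof of Theorem \ref{gabe:clopenpoints}, in this case the closed set $U_M = \{\mathfrak q\in\Prim C^\ast(E): J_{\Omega(M),\Omega(M)^\mathrm{fin}_\infty}\subseteq\mathfrak q\}$ equals the singleton $\{\mathfrak p\}$ (using that $R_{N,t}\supseteq J_{\Omega(M),\Omega(M)^\mathrm{fin}_\infty}$ forces $M\subseteq N$, hence $M=N$ by Theorem \ref{gabe:t1}, but $M\in\mathscr M_\gamma(E)$ whereas $N\in\mathscr M_\tau(E)$, a contradiction; and $J_{\Omega(N),\Omega(N)^\mathrm{fin}_\infty}\supseteq J_{\Omega(M),\Omega(M)^\mathrm{fin}_\infty}$ forces $\Omega(N)\subseteq\Omega(M)$, i.e.\ $M\subseteq N$, hence again $M=N$). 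Therefore $\{\mathfrak p\}=U_M$ is clopen if and only if $U_M$ is clopen, which by Theorem \ref{gabe:clopenpoints} happens if and only if $M$ is isolated. This gives both directions of the equivalence in the case $M\in\mathscr M_\gamma(E)$.

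It remains to rule out the case $\mathfrak p = R_{N,t}$. Here I would invoke the 'in particular' part of Theorem \ref{gabe:clopenpoints}: since $N\in\mathscr M_\tau(E)$ contains a loop, $N$ is isolated by Lemma \ref{gabe:maxtailloop}, and $U_N = \{R_{N,t}: t\in\mathbb T\}$ is clopen and homeomorphic to $S^1$. A singleton $\{R_{N,t}\}$ is therefore a proper nonempty closed subset of the clopen set $U_N\cong S^1$; I would argue that if $\{R_{N,t}\}$ were also open, then $U_N$ would contain a nonempty proper clopen subset, contradicting the connectedness of $S^1$. Hence $\{\mathfrak p\}$ is never clopen when $\mathfrak p=R_{N,t}$, completing the proof.

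The only mildly delicate point — and the place where I would be most careful — is the last step: strictly speaking I should check that if $\{R_{N,t}\}$ is open in $\Prim C^\ast(E)$ then $\{R_{N,t}\}$ is open (equivalently clopen) in the subspace $U_N$. This is immediate since $U_N$ is itself open in $\Prim C^\ast(E)$, so an open subset of the ambient space that is contained in $U_N$ is open in the subspace topology of $U_N$; combined with $\{R_{N,t}\}$ being closed in the Hausdorff (indeed $T_1$) space $U_N\cong S^1$, this would exhibit a clopen proper nonempty subset of $S^1$, which is absurd. No serious obstacle is expected; the corollary is essentially a bookkeeping consequence of the theorem once one separates the two types of primitive ideals.
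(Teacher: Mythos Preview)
Your proposal is correct and follows the same route the paper intends: the corollary is stated without proof precisely because the ``in particular'' clause of Theorem \ref{gabe:clopenpoints} already handles the case $M\in\mathscr M_\gamma(E)$, and the $S^1$-connectedness observation you make disposes of the $R_{N,t}$ case. One small simplification: your parenthetical verification that $U_M=\{\mathfrak p\}$ for $M\in\mathscr M_\gamma(E)$ can be replaced by the one-line remark that $U_M$ is by definition the hull-kernel closure of the primitive ideal $J_{\Omega(M),\Omega(M)^\mathrm{fin}_\infty}$, which is a singleton because $\Prim C^\ast(E)$ is $T_1$; this avoids having to trace through the ideal lattice and Theorem \ref{gabe:t1} explicitly.
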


\begin{corollary}\label{gabe:discrete}
Let $E$ be a $T_1$ graph and suppose that every maximal tail in $E$ is isolated. Then 
\[
\Prim C^\ast(E) \cong \bigsqcup_{M \in \mathscr M_\gamma} \star \sqcup \bigsqcup_{M\in \mathscr M_\tau} S^1
\]
is a disjoint union, where $\star$ is a one-point topological space and $S^1$ is the circle.

In particular, if $E$ in addition has condition (K) then $\Prim C^\ast(E)$ is discrete.
\end{corollary}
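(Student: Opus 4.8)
The plan is to deduce this from Theorem~\ref{gabe:clopenpoints} by exhibiting $\Prim C^\ast(E)$ as a topological disjoint union of the clopen sets $U_M$, one for each maximal tail $M$. Since $E$ is a $T_1$ graph it has no breaking vertices by Lemma~\ref{gabe:nobreaking}, so the Hong--Szymański bijection identifies $\Prim C^\ast(E)$ with $\mathscr M_\gamma(E)\sqcup(\mathscr M_\tau(E)\times\mathbb T)$. With $U_M:=\{\mathfrak p\in\Prim C^\ast(E):J_{\Omega(M),\Omega(M)^{\mathrm{fin}}_\infty}\subseteq\mathfrak p\}$ as in the proof of Theorem~\ref{gabe:clopenpoints}, one has $U_M=\{J_{\Omega(M),\Omega(M)^{\mathrm{fin}}_\infty}\}$ when $M\in\mathscr M_\gamma(E)$ and $U_M=\{R_{M,t}:t\in\mathbb T\}$ when $M\in\mathscr M_\tau(E)$ (using \cite[Lemma~2.6]{gabe:HS}). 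It then follows directly from the bijection being injective that $\{U_M\}_{M\in\mathscr M(E)}$ is a partition of $\Prim C^\ast(E)$: every primitive ideal lies in exactly one $U_M$, and $U_M\cap U_{M'}=\emptyset$ for $M\neq M'$. By hypothesis every maximal tail of $E$ is isolated, so each $U_M$ is clopen by Theorem~\ref{gabe:clopenpoints}.

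I would then invoke the elementary observation that a topological space partitioned into a family of clopen subsets is canonically homeomorphic to the coproduct of those subsets equipped with their subspace topologies: the canonical continuous bijection from $\bigsqcup_{M\in\mathscr M(E)}U_M$ onto $\Prim C^\ast(E)$ is also open, because a subset meeting every $U_M$ in a relatively open set is a union of subsets that are open in $\Prim C^\ast(E)$. Feeding in the identifications $U_M\cong\star$ for $M\in\mathscr M_\gamma(E)$ and $U_M\cong S^1$ for $M\in\mathscr M_\tau(E)$ (the latter supplied by Theorem~\ref{gabe:clopenpoints}) gives the claimed homeomorphism.

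Finally, for the additional statement, the point is that condition (K) forces $\mathscr M_\tau(E)=\emptyset$, after which $\Prim C^\ast(E)$ is a disjoint union of one-point spaces and hence discrete. This can be read off from the definitions: if $M\in\mathscr M_\tau(E)$ then $M$ contains a simple loop $\alpha$, based at some $v$, with no exit in $M$; any simple loop based at $v$ stays inside $M$ by axiom~(1) of a maximal tail, and since $\alpha$ has no exit in $M$ it must coincide edge by edge with $\alpha$ and therefore equal $\alpha$, so $v$ lies on a unique simple loop, contradicting condition (K). (Alternatively, condition (K) makes every ideal of $C^\ast(E)$ gauge-invariant, whereas for $N\in\mathscr M_\tau(E)$ the circle $\{R_{N,t}:t\in\mathbb T\}$ of primitive ideals is not pointwise fixed by the gauge action.) The only steps requiring a little care are the verification that the $U_M$ genuinely partition $\Prim C^\ast(E)$ and this last deduction about condition (K); everything else is formal once Theorem~\ref{gabe:clopenpoints} is available, which is where the real work of the section lies.
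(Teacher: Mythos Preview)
Your proof is correct and follows precisely the approach the paper intends: the corollary is stated in the paper without proof, as an immediate consequence of Theorem~\ref{gabe:clopenpoints}, and your argument spells out exactly that deduction. The partition of $\Prim C^\ast(E)$ into the clopen sets $U_M$, the identification of each $U_M$ via the ``in particular'' clause of Theorem~\ref{gabe:clopenpoints}, and the handling of condition~(K) via $\mathscr M_\tau(E)=\emptyset$ are all just what is needed.
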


If two graphs $E$ and $F$ have Morita equivalent $C^\ast$-algebras, then the corresponding ideal lattices are canonically isomorphic. Hence, if $E$ and $F$ have no breaking vertices, there is an induced one-to-one correspondence between the maximal tails in $E$ and $F$. The following corollary is immediate from Theorem \ref{gabe:clopenpoints}.

\begin{corollary}\label{gabe:isodesing}
Let $E$ and $F$ be $T_1$ graphs such that $C^\ast(E)$ and $C^\ast(F)$ are Morita equivalent. Then a maximal tail in $E$ is isolated if and only if the corresponding maximal tail in $F$ is isolated.
\end{corollary}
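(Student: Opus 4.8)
## Proof Proposal for Corollary \ref{gabe:isodesing}

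The plan is to reduce everything to Theorem \ref{gabe:clopenpoints} via the Morita-invariance of the primitive ideal space together with its topology. The key observation is that being isolated is, by Theorem \ref{gabe:clopenpoints}, equivalent to a purely topological condition on $\Prim C^\ast(E)$ — namely that a certain closed set $U_M$ is open — and the topology on $\Prim$ is a Morita invariant.

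First I would recall that a Morita equivalence between $C^\ast(E)$ and $C^\ast(F)$ induces a lattice isomorphism of ideal lattices, hence a homeomorphism $\Phi \colon \Prim C^\ast(E) \to \Prim C^\ast(F)$. Since both graphs are $T_1$ graphs, by Lemma \ref{gabe:nobreaking} neither has breaking vertices, so the correspondence between ideal lattices restricts to a bijection between the gauge-invariant ideals of the form $J_{\Omega(M),\Omega(M)^\mathrm{fin}_\infty}$ (the maximal gauge-invariant ideals, by Theorem \ref{gabe:t1}(3)) on each side; this is precisely the claimed one-to-one correspondence $M \leftrightarrow M'$ between maximal tails of $E$ and $F$. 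Moreover, $\Phi$ carries the closed set $U_M = \{ \mathfrak p \in \Prim C^\ast(E) : J_{\Omega(M),\Omega(M)^\mathrm{fin}_\infty} \subseteq \mathfrak p\}$ onto $U_{M'} = \{ \mathfrak q \in \Prim C^\ast(F) : J_{\Omega(M'),\Omega(M')^\mathrm{fin}_\infty} \subseteq \mathfrak q\}$, since containment of ideals is preserved by the lattice isomorphism.

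The conclusion is then immediate: $M$ is isolated if and only if $U_M$ is clopen in $\Prim C^\ast(E)$ (by Theorem \ref{gabe:clopenpoints}), if and only if $\Phi(U_M) = U_{M'}$ is clopen in $\Prim C^\ast(F)$ (since $\Phi$ is a homeomorphism), if and only if $M'$ is isolated (again by Theorem \ref{gabe:clopenpoints}).

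I do not expect any real obstacle here; the content of the statement was already absorbed into Theorem \ref{gabe:clopenpoints}, and what remains is the bookkeeping of identifying the Morita-induced homeomorphism of primitive ideal spaces with the combinatorial correspondence of maximal tails. The only point requiring mild care is confirming that the lattice isomorphism of ideals sends $J_{\Omega(M),\Omega(M)^\mathrm{fin}_\infty}$ to $J_{\Omega(M'),\Omega(M')^\mathrm{fin}_\infty}$ rather than to some other maximal gauge-invariant ideal — but this follows because a lattice isomorphism must carry the maximal elements below the whole algebra to maximal elements, and the combinatorial correspondence $M \leftrightarrow M'$ is by definition the one induced on hereditary saturated sets, which is compatible with the identification $H \mapsto J_H$ (equivalently $\Omega(M) \mapsto \Omega(M')$).
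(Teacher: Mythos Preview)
Your proposal is correct and follows exactly the approach indicated in the paper: the paragraph preceding the corollary notes that a Morita equivalence induces a lattice isomorphism of ideals and hence a bijection of maximal tails, and then declares the corollary ``immediate from Theorem \ref{gabe:clopenpoints}''. You have simply spelled out that immediacy --- isolated $\Leftrightarrow$ $U_M$ clopen, and clopen-ness is preserved under the induced homeomorphism of primitive ideal spaces --- which is precisely the intended argument.
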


Our main application of the above theorem is the following corollary.

\begin{corollary}\label{gabe:directsum}
Any purely infinite graph $C^\ast$-algebra with a $T_1$ (in particular Hausdorff) primitive ideal space is isomorphic to a $c_0$-direct sum of Kirchberg algebras.
\end{corollary}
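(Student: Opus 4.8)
The plan is to show that $\Prim C^\ast(E)$ is discrete; the conclusion then follows formally. Indeed, if $\Prim C^\ast(E)$ is discrete then for each $\mathfrak p\in\Prim C^\ast(E)$ the ideal $I_{\mathfrak p}:=\bigcap_{\mathfrak q\neq\mathfrak p}\mathfrak q$ satisfies $I_{\mathfrak p}\cap\mathfrak p=\bigcap_{\mathfrak q}\mathfrak q=0$ and has $\mathfrak p$ as its only primitive ideal, so $I_{\mathfrak p}$ is simple; the $I_{\mathfrak p}$ are pairwise orthogonal ideals and $\sum_{\mathfrak p}I_{\mathfrak p}$ has empty hull, so $C^\ast(E)\cong\bigoplus_{\mathfrak p}I_{\mathfrak p}$ as a $c_0$-direct sum. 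Each $I_{\mathfrak p}$ is separable, nuclear and in the bootstrap class (being an ideal of a graph $C^\ast$-algebra), simple by the above, and purely infinite because ideals of purely infinite $C^\ast$-algebras are purely infinite; hence each $I_{\mathfrak p}$ is a Kirchberg algebra.

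To prove discreteness, note first that by Lemma \ref{gabe:nobreaking} the graph $E$ has no breaking vertices, so by Theorem \ref{gabe:HSthm} (and the preceding bijection) every primitive ideal of $C^\ast(E)$ is of the form $J_{\Omega(M),\Omega(M)^{\mathrm{fin}}_\infty}$ for some $M\in\mathscr M_\gamma(E)$ or of the form $R_{N,t}$ for some $N\in\mathscr M_\tau(E)$ and $t\in\mathbb T$. I claim $\mathscr M_\tau(E)=\emptyset$. If $N\in\mathscr M_\tau(E)$, the generating simple loop of $N$ has no exit inside $N$, so in the quotient graph whose $C^\ast$-algebra is $C^\ast(E)/J_{\Omega(N),\Omega(N)^{\mathrm{fin}}_\infty}$ this loop has no exit at all; consequently that quotient contains a hereditary subalgebra Morita equivalent to $C(\mathbb T)$, which is stably finite. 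This is impossible, since $C^\ast(E)$ is purely infinite and pure infiniteness passes to quotients and to hereditary subalgebras. Hence every primitive ideal equals $\mathfrak p_M:=J_{\Omega(M),\Omega(M)^{\mathrm{fin}}_\infty}$ for some $M\in\mathscr M_\gamma(E)$.

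Next I claim every maximal tail $M$ contains a loop. Since $\Prim C^\ast(E)$ is $T_1$, $\mathfrak p_M$ is a maximal ideal, so $C^\ast(E)/\mathfrak p_M$ --- the $C^\ast$-algebra of the associated quotient graph --- is simple; as a quotient of the purely infinite algebra $C^\ast(E)$ it is purely infinite, hence not AF, so that quotient graph has a loop, which is necessarily a loop contained in $M$. By Lemma \ref{gabe:maxtailloop}, $M$ is therefore isolated. Thus every maximal tail is isolated, and Corollary \ref{gabe:onepointsets} shows that each one-point set $\{\mathfrak p_M\}\subseteq\Prim C^\ast(E)$ is clopen; equivalently, Corollary \ref{gabe:discrete} applies and $\Prim C^\ast(E)$ is discrete because $\mathscr M_\tau(E)=\emptyset$.

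The step I expect to be the main obstacle is extracting from the $C^\ast$-algebraic hypothesis ``purely infinite'' the two graph-theoretic facts used above: that $\mathscr M_\tau(E)$ is empty, and that every maximal tail contains a loop. Both reduce to the principle that a loop without an exit (or, more abstractly, an AF simple subquotient) is incompatible with pure infiniteness, used together with the permanence of pure infiniteness under passage to ideals, quotients and hereditary subalgebras. Knowing only that every simple subquotient of $C^\ast(E)$ is purely infinite would not force discreteness --- $C(S^1)\otimes\mathcal O_2$ is purely infinite with non-discrete primitive ideal space --- so the graph-theoretic results of Section \ref{gabe:sec:clopen}, in particular Theorem \ref{gabe:clopenpoints}, Lemma \ref{gabe:maxtailloop} and Corollary \ref{gabe:onepointsets}, are genuinely needed.
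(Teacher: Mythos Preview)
Your proof is correct and follows the same overall strategy as the paper: show that every maximal tail contains a loop, invoke Lemma~\ref{gabe:maxtailloop} to conclude all tails are isolated, deduce discreteness of $\Prim C^\ast(E)$ via Corollary~\ref{gabe:discrete}, and then read off the $c_0$-decomposition into Kirchberg algebras.

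The one point of divergence is the source of the two graph-theoretic facts. The paper simply cites \cite[Theorem~2.3]{gabe:HSpi}, which says that pure infiniteness of $C^\ast(E)$ forces $E$ to have condition (K) and every maximal tail to contain a loop; both $\mathscr M_\tau(E)=\emptyset$ and the loop condition then come for free. You instead argue each fact directly from permanence of pure infiniteness: a loop without exit in a quotient graph would produce a stably finite hereditary subalgebra, and a loopless quotient graph would give an AF (hence stably finite) simple quotient. This is a perfectly valid, more self-contained route that avoids the external reference; the paper's route is shorter but black-boxes the same ideas. Your expanded explanation of why discreteness yields the $c_0$-decomposition is also more detailed than the paper's one-line ``$C^\ast(E)$ is the $c_0$-direct sum of all its simple ideals,'' which is fine.
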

\begin{proof}
Let $E$ be a $T_1$ graph such that $C^\ast(E)$ is purely infinite. By \cite[Theorem 2.3]{gabe:HSpi} $E$ has condition (K) and every maximal tail in $E$ contains a loop, and is thus isolated by Lemma \ref{gabe:maxtailloop}. By Corollary \ref{gabe:discrete} the primitive ideal space $\Prim C^\ast(E)$ is discrete. Hence $C^\ast(E)$ is the $c_0$-direct sum of all its simple ideals, which are Kirchberg algebras.
\end{proof}

We also have another application of the above theorem.

\begin{corollary}\label{gabe:quotientaf}
Let $A$ be a graph $C^\ast$-algebra for which the primitive ideal space is $T_1$. Let $J$ be the ideal generated by all the direct summands in $A$ corresponding to $A/J_{\Omega(M),\Omega(M)^\mathrm{fin}_\infty}$ where $M$ is an isolated maximal tail. Then $A/J$ is an AF-algebra.
\end{corollary}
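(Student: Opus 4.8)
The plan is to identify $A/J$ as a graph $C^\ast$-algebra of a quotient graph and then show that this quotient graph has no loops, whence its $C^\ast$-algebra is an AF-algebra. First I would reduce to the case of a row-finite graph without sinks by passing to the desingularisation $F$ of $E$; since $E$ has no breaking vertices (Lemma \ref{gabe:nobreaking}), the ideal lattices and the sets of maximal tails correspond, and $C^\ast(E)$ is Morita equivalent to $C^\ast(F)$, so by Corollary \ref{gabe:isodesing} the isolated maximal tails correspond as well. As $J$ is gauge-invariant (it is generated by gauge-invariant ideals $J_{\Omega(M),\Omega(M)^\mathrm{fin}_\infty}$—these are the complemented summands picked out in Corollary \ref{gabe:discrete}'s situation locally), the quotient $A/J$ is again a graph $C^\ast$-algebra, namely $C^\ast(F/H)$ for the hereditary saturated set $H$ of $F$ corresponding to $J$. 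Concretely, $H$ is the union $\bigcup \Omega(M)^c$'s complement business—more precisely, $F^0 \setminus H$ consists of those vertices lying in some maximal tail that is \emph{not} isolated.

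Next I would show the quotient graph $F/H$ (restricted to the vertices $F^0\setminus H$, with edges between them) contains no loops. Suppose it did; then a loop in $F\setminus H$ has some base vertex $v\in F^0\setminus H$, and by the description preceding Theorem \ref{gabe:HSthm}, $E\setminus\Omega(v)$ is a maximal tail; call the corresponding maximal tail of $F$ (containing the loop) $M_v$. By Lemma \ref{gabe:maxtailloop}, $M_v$ is isolated, so $J_{\Omega(M_v)}$ is one of the summands whose sum generates $J$, hence $\Omega(M_v)^c = M_v \subseteq F^0 \setminus H$ would force... wait, rather: since $M_v$ is isolated and row-finite, $M_v$ contains a vertex $w$ lying in no other maximal tail, and then by the construction of $J$ this vertex $w$ lies in $H$ (the summand $J_{\Omega(M_v)}$ contributes $\Sigma H(w)\subseteq H$). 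But the loop at $v$ lies in $M_v$, hence every vertex of the loop is $\geq$ some vertex of any maximal tail through it, in particular $v \geq w$, so since the loop returns to $v$ we get $v\in \Sigma H(w)\subseteq H$—contradicting $v\in F^0\setminus H$. So $F/H$ has no loops.

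Finally, a graph $C^\ast$-algebra of a graph with no loops is an AF-algebra: this is the Kumjian--Pask--Raeburn criterion (condition (K) is automatic, and $C^\ast$ of a no-loop graph is AF). Hence $A/J \cong C^\ast(F/H)$ is AF, and by the Morita equivalence $C^\ast(E)/J_E \sim C^\ast(F)/J$ for the original ideal $J_E$ of $A=C^\ast(E)$, we conclude $A/J$ is an AF-algebra (AF-ness passes to Morita equivalent $\sigma$-unital $C^\ast$-algebras, or one argues directly on $E$). The main obstacle I anticipate is the bookkeeping in the second paragraph: correctly pinning down the hereditary saturated set $H$ corresponding to $J$ and verifying that every loop-base vertex is absorbed into $H$—this rests on showing that each isolated maximal tail $M$ with a loop contributes, via some $J_{\Sigma H(w)}$ or $J_{\Sigma H(v)}$ with $v$ a loop vertex, all of $M$'s loop vertices to $H$. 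Once that is clean, the no-loop-implies-AF step is standard.
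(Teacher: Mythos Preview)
Your approach is the paper's: pass to a row-finite desingularisation, identify $J$ with $J_{\Sigma H(V)}$ where $V$ is the set of vertices lying in exactly one maximal tail, show the quotient graph has no loops, and conclude AF. Two things to clean up.

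First, the inequality in your loop argument is reversed. You assert $v \geq w$, but what you actually have is $w \in M_v = \{u \in F^0 : u \geq v\}$, i.e.\ $w \geq v$. It is \emph{this} direction that yields $v \in \Sigma H(w)$ by heredity; the phrase ``since the loop returns to $v$'' plays no role. With the inequality corrected the contradiction goes through.

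Second, the paper avoids the detour through an auxiliary vertex $w$ altogether. The proof of Lemma~\ref{gabe:maxtailloop} shows more than ``$M_v$ is isolated'': it shows that the loop base $v$ itself lies in no maximal tail other than $M_v$. Hence $v \in V$ directly, so $v \in \Sigma H(V)$, and the quotient graph $E \setminus \Sigma H(V)$ has no loops. This replaces your second paragraph by one line. Your muddled description of $F^0 \setminus H$ (``vertices lying in some maximal tail that is not isolated'') is not actually correct, but since you never use it, it does no harm; the only fact needed is $\Sigma H(w) \subseteq H$ for each $w \in V$, which you have.
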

\begin{proof}
Note that the ideal is well-defined by Theorem \ref{gabe:clopenpoints}, since $J_{\Omega(M),\Omega(M)^\mathrm{fin}_\infty}$ is a direct summand in $A$ for every isolated maximal tail $M$. By Corollary \ref{gabe:isodesing} it suffices to prove this up to Morita equivalence. Hence we may assume that there is a row-finite graph $E$ such that $C^\ast(E)=A$. Let $V$ denote the set of all vertices which are contained in exactly one maximal tail. For any isolated maximal tail $M$, the direct summand in $A$ which corresponds to $A/J_{\Omega(M)}$ is $J_{\Sigma H(v)}$ where $v$ is any vertex in $M$ which is not contained in any other maximal tail. Hence $J=J_{\Sigma H(V)}$ since this is the smallest ideal containing all $J_{\Sigma H(v)}$ for $v\in V$. By Lemma \ref{gabe:maxtailloop} any vertex which is the base of a loop, is in $V$. Hence the graph $E\backslash \Sigma H(V)$ contains no loops and thus $A/J = C^\ast(E\backslash \Sigma H(V))$ is an AF-algebra.
\end{proof}

\begin{remark}
By an analogous argument as given in the proof of Corollary \ref{gabe:quotientaf}, we get the following result. Let $A$ be a real rank zero graph $C^\ast$-algebra for which the primitive ideal space is $T_1$. Then $A$ contains a (unique) purely infinite ideal $J$ such that $A/J$ is an AF-algebra. 

In fact, we could define $V$ in the proof of Corollary \ref{gabe:quotientaf} to be the set of all vertices which are the base of some loop. Then $J=J_{\Sigma H(V)}$ would be the direct sum of all simple purely infinite ideals in $A$, and $A/J$ would again be an AF-algebra.
\end{remark}

\begin{remark}\label{gabe:C(N)}
Let $\widetilde {\mathbb N} = \mathbb N \cup \{ \infty \}$ be the one-point compactification of $\mathbb N$. We may give any graph $C^\ast$-algebra $A$ with a $T_1$ primitive ideal space a canonical structure of a $C(\widetilde{\mathbb N})$-algebra. In fact, list all of the direct summands in $A$ corresponding to $A/J_{\Omega(M),\Omega(M)^\mathrm{fin}_\infty}$ for $M$ an isolated maximal tail, as $J_1,J_2,\dots$. By letting
\[
A(\{ n\} ) = J_n, \text{ and } A(\{n,n+1,\dots,\infty\}) = A/\bigoplus_{k=1}^{n-1} J_k,
\]
then $A$ gets the structure of a $C^\ast$-algebra over $\widetilde{\mathbb N}$ which is the same as a (not necessarily continuous) $C(\widetilde{\mathbb N})$-algebra (see e.g. \cite{gabe:MN}). This structure is unique up to an automorphism functor $\sigma_*$ on $\mathfrak {C^\ast alg}(\widetilde{\mathbb N})$, the category of $C(\widetilde{\mathbb N})$-algebras, where $\sigma:\widetilde{\mathbb N}\to \widetilde{\mathbb N}$ is a homeomorphism. Moreover, by Corollary \ref{gabe:quotientaf}, the fibre $A_\infty$ is an AF-algebra.
\end{remark}

Using the structure of a $C(\widetilde{\mathbb N})$-algebra we may construct an $\widetilde{\mathbb N}$-filtered $K$-theory functor as in \cite{gabe:DM}. In fact, let $C(\widetilde{\mathbb N},\mathbb Z)$ be the ring of locally constant maps $\widetilde{\mathbb N} \to \mathbb Z$. If $A$ is a $C(\widetilde{\mathbb N})$-algebra then the $K$-theory $K_\ast(A)$ has the natural structure as a $\mathbb Z/2$-graded $C(\widetilde{\mathbb N},\mathbb Z)$-module. Similarly, let $\Lambda$ be the ring of Böckstein operation, and let $C(\widetilde{\mathbb N},\Lambda)$ be the ring of locally constant maps $\widetilde{\mathbb N} \to \Lambda$. If $A$ is a $C(\widetilde{\mathbb N})$-algebra then the total $K$-theory $\underline K(A)$ has the natural structure as a $C(\widetilde{\mathbb N},\Lambda)$-module. It is this latter invariant, that Dadarlat and Meyer proved a UMCT for.

We end this paper by showing that for $T_1$ graph $C^\ast$-algebras given $C(\widetilde{\mathbb N})$-algebra structures as in Remark \ref{gabe:C(N)}, an isomorphism of $\widetilde{\mathbb N}$-filtered $K$-theory (without coefficients) lifts to an $E(\widetilde{\mathbb N})$-equivalence. Note that this is not true in general by \cite[Example 6.14]{gabe:DM}.

\begin{proposition}\label{gabe:e-quivalence}
Let $A$ and $B$ be graph $C^\ast$-algebras with $T_1$ primitive ideal spaces, and suppose that these have the structure of $C(\widetilde{\mathbb N})$-algebras as in Remark \ref{gabe:C(N)}. Then $K_\ast(A) \cong K_\ast(B)$ as $\mathbb Z/2$-graded $C(\widetilde{\mathbb N},\mathbb Z)$-modules if and only if $A$ and $B$ are $E(\widetilde{\mathbb N})$-equivalent.

In addition, if $A$ and $B$ are continuous $C(\widetilde{\mathbb N})$-algebras, then $K_\ast(A) \cong K_\ast(B)$ as $\mathbb Z/2$-graded $C(\widetilde{\mathbb N},\mathbb Z)$-modules if and only if $A$ and $B$ are $KK(\widetilde{\mathbb N})$-equivalent.
\end{proposition}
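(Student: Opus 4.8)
The plan is as follows.

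\emph{Reduction to total $K$-theory.} The implication ``$E(\widetilde{\mathbb N})$-equivalent $\Rightarrow K_\ast(A)\cong K_\ast(B)$'' is immediate: an $E(\widetilde{\mathbb N})$-equivalence induces an isomorphism of total $\widetilde{\mathbb N}$-filtered $K$-theory $\underline K(-)$ as $C(\widetilde{\mathbb N},\Lambda)$-modules, and restricting to the $\mathbb Z$-coefficient part yields the desired $C(\widetilde{\mathbb N},\mathbb Z)$-module isomorphism; likewise for $KK(\widetilde{\mathbb N})$. For the converse, I would invoke the Dadarlat--Meyer UMCT in equivariant $E$-theory \cite{gabe:DM}: the space $\widetilde{\mathbb N}$ is second countable, zero-dimensional and compact Hausdorff, and every subquotient of $A$ or $B$ is a graph $C^\ast$-algebra, hence nuclear and in the UCT class, so the hypotheses are met and an isomorphism $\underline K(A)\cong\underline K(B)$ of $C(\widetilde{\mathbb N},\Lambda)$-modules already produces an $E(\widetilde{\mathbb N})$-equivalence (and a $KK(\widetilde{\mathbb N})$-equivalence when $A,B$ are continuous, via the $KK(\widetilde{\mathbb N})$-version of the UMCT). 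Thus the whole problem reduces to upgrading a given $C(\widetilde{\mathbb N},\mathbb Z)$-module isomorphism $\phi\colon K_\ast(A)\to K_\ast(B)$ to a $C(\widetilde{\mathbb N},\Lambda)$-module isomorphism $\underline K(A)\cong\underline K(B)$.

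\emph{Structural facts.} Writing $J_n=A(\{n\})$ and $A_\infty=A(\{\infty\})$, one has the extension $0\to\bigoplus_n J_n\to A\to A_\infty\to 0$, where $\bigoplus_n J_n$ is the $c_0$-direct sum of the fibres at the points of $\mathbb N$, each $J_n$ is a graph $C^\ast$-algebra (hence in the UCT class) which is moreover a direct summand of $A$ because $\{n\}$ is clopen in $\widetilde{\mathbb N}$, and $A_\infty$ is an AF-algebra by Corollary~\ref{gabe:quotientaf}. The first thing I would establish is that all connecting homomorphisms in $K$-theory, with arbitrary coefficients, vanish: since $\{1,\dots,m-1\}$ is clopen, $A=\bigl(\bigoplus_{k<m}J_k\bigr)\oplus A(\{m,m+1,\dots,\infty\})$ for every $m$, so the index map $K_0(A_\infty)\to K_1(\bigoplus_n J_n)=\bigoplus_n K_1(J_n)$ has image inside $\bigoplus_{k\ge m}K_1(J_k)$ for all $m$ and hence is zero, while $K_1(A_\infty)=0$ and $K_0(A_\infty)$ torsion-free (as $A_\infty$ is AF) kills the remaining, coefficient, connecting maps. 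It follows that $\underline K(A)$ sits in a short exact sequence $0\to\bigoplus_n\underline K(J_n)\to\underline K(A)\to\underline K(A_\infty)\to 0$ of $C(\widetilde{\mathbb N},\Lambda)$-modules, where $\underline K(A_\infty)$ is concentrated at $\infty$ and, being the total $K$-theory of an AF-algebra, equals $K_0(A_\infty)\otimes_{\mathbb Z}\underline K(\mathbb C)$ and is therefore generated over $\Lambda$ by its $\mathbb Z$-coefficient part $K_0(A_\infty)$; and each $\underline K(J_n)$, being the total $K$-theory of a UCT-algebra, is determined up to $\Lambda$-isomorphism by $K_\ast(J_n)$.

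\emph{The upgrade.} The isomorphism $\phi$ respects the idempotents $\chi_{\{n\}}\in C(\widetilde{\mathbb N},\mathbb Z)$, so it restricts to isomorphisms $\phi_n\colon K_\ast(J_n)\xrightarrow{\cong}K_\ast(J_n^{\,B})$ and, passing to the quotient fibres at $\infty$, descends to an isomorphism $\bar\phi\colon K_0(A_\infty)\xrightarrow{\cong}K_0(B_\infty)$. By the ordinary UCT and the usual Rosenberg--Schochet argument, each $\phi_n$ lifts to a $KK$-equivalence $J_n\to J_n^{\,B}$ and $\bar\phi$ lifts to a $KK$-equivalence $A_\infty\to B_\infty$, inducing $\Lambda$-module isomorphisms $\underline\phi_n\colon\underline K(J_n)\cong\underline K(J_n^{\,B})$ and $\underline{\bar\phi}\colon\underline K(A_\infty)\cong\underline K(B_\infty)$ that agree with $\phi_n$, $\bar\phi$ on $\mathbb Z$-coefficients. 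Since the connecting maps vanish, $\underline K(A)$ is the extension of $\underline K(A_\infty)$ by $\bigoplus_n\underline K(J_n)$ given by a class $e_A\in\mathrm{Ext}^1_{C(\widetilde{\mathbb N},\Lambda)}(\underline K(A_\infty),\bigoplus_n\underline K(J_n))$; because $\underline K(A_\infty)$ is $\Lambda$-generated in degree zero, $e_A$ is detected by its $\mathbb Z$-coefficient shadow, which is exactly the class of the $C(\widetilde{\mathbb N},\mathbb Z)$-module extension $0\to\bigoplus_n K_\ast(J_n)\to K_\ast(A)\to K_0(A_\infty)\to 0$. That class is carried to the corresponding one for $B$ by $\phi$ (compatibly with $\bar\phi$ and the $\phi_n$), hence $e_A$ and $e_B$ correspond under $\underline{\bar\phi}$ and the $\underline\phi_n$, and so these isomorphisms extend to an isomorphism $\underline K(A)\cong\underline K(B)$ of $C(\widetilde{\mathbb N},\Lambda)$-modules. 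Combined with the reduction step this finishes the $E(\widetilde{\mathbb N})$-statement, and the $KK(\widetilde{\mathbb N})$-statement follows identically using the continuity hypothesis and the $KK(\widetilde{\mathbb N})$-version of the UMCT.

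The step I expect to be the main obstacle is precisely the last reconstruction: verifying that, for these algebras, the $C(\widetilde{\mathbb N},\Lambda)$-module $\underline K(A)$ contains no information beyond the $C(\widetilde{\mathbb N},\mathbb Z)$-module $K_\ast(A)$ together with the fibrewise Böckstein structures $\underline K(J_n)$ — equivalently, that the extension class $e_A$ is recovered from its $\mathbb Z$-coefficient part. Both the vanishing of the connecting maps and the AF-ness of $A_\infty$ (which forces $\underline K(A_\infty)$ to be torsion-free in degree $0$, zero in degree $1$, and hence $\Lambda$-generated in degree $0$) are genuinely needed here; without such hypotheses the statement is false already for $C^\ast$-algebras over $\widetilde{\mathbb N}$ by \cite[Example~6.14]{gabe:DM}.
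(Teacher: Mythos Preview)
Your overall strategy---reduce via the Dadarlat--Meyer UMCT to lifting $\phi$ to an isomorphism of $C(\widetilde{\mathbb N},\Lambda)$-modules---is exactly the paper's, and your structural observations (AF-ness of $A_\infty$, vanishing of connecting maps via the clopen decompositions) are correct and in fact slightly more carefully argued than in the paper. The divergence is entirely in how the lift $\underline\phi$ is produced.

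The paper does not go through extension classes at all. It exploits one structural fact you never invoke: for any graph $C^\ast$-algebra $D$, the group $K_1(D)$ is free abelian. This immediately forces $K_0(D;\mathbb Z/n)=K_0(D)\otimes\mathbb Z/n$, so one can simply set $\underline\phi_0$ equal to $\phi_0$ together with the maps $\phi_0\otimes\mathrm{id}_{\mathbb Z/n}$. For degree~$1$ the paper shows, from $K_1(D_\infty;\mathbb Z/n)=0$ and surjectivity of $K_0(D;\mathbb Z/n)\to K_0(D_\infty;\mathbb Z/n)$, that $K_1(D;\mathbb Z/n)\cong\bigoplus_{k}K_1(D_k;\mathbb Z/n)$, and then defines $\underline\phi_1$ fibrewise via UCT-lifts of the restricted isomorphisms $\phi_{\ast,k}$. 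One then checks by hand that the resulting $\underline\phi$ is a $C(\widetilde{\mathbb N},\Lambda)$-module map. This is elementary and avoids any Ext computation.

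Your route is more conceptual but has a genuine soft spot precisely where you flag it. The assertion that $e_A$ is ``detected by its $\mathbb Z$-coefficient shadow'' because $\underline K(A_\infty)$ is $\Lambda$-generated in degree~$0$ is not a formal consequence of generation: one would need $\underline K(A_\infty)$ to be $\Lambda$-projective (or at least to know that the relevant restriction map on $\mathrm{Ext}^1_{C(\widetilde{\mathbb N},\Lambda)}$ is injective), and while this is plausible for AF fibres it is not proved. Moreover, even granting that, you must still argue that the chosen lifts $\underline\phi_n$ and $\underline{\bar\phi}$ carry $e_A$ to $e_B$; the UCT lift of $\phi_n$ is not unique, so some care is needed. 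None of this is insurmountable, but it is extra work that the paper's direct construction sidesteps entirely by using freeness of $K_1$.

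One small correction for the continuous case: the paper does not appeal to a separate $KK(\widetilde{\mathbb N})$-UMCT, but rather to the fact (\cite[Theorem 5.4]{gabe:DM}) that $E(\widetilde{\mathbb N})$- and $KK(\widetilde{\mathbb N})$-theory coincide for continuous $C(\widetilde{\mathbb N})$-algebras, so the $E(\widetilde{\mathbb N})$-equivalence already obtained is automatically a $KK(\widetilde{\mathbb N})$-equivalence.
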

\begin{proof}
Clearly an $E(\widetilde{\mathbb N})$-equivalence induces an isomorphism of $\widetilde{\mathbb N}$-filtered $K$-theory. Suppose that $\phi=(\phi_0,\phi_1) \colon K_\ast(A) \to K_\ast(B)$ is an isomorphism of $\mathbb Z/2$-graded $C(\widetilde{\mathbb N},\mathbb Z)$-modules. By the UMCT of Dadarlat and Meyer, \cite[Theorem 6.11]{gabe:DM}, it suffices to lift $\phi$ to an isomorphism of $\widetilde{\mathbb N}$-filtered total $K$-theory. Since the $K_1$-groups are free, $K_0(D;\mathbb Z/n) = K_0(D)\otimes \mathbb Z/n$ for $D\in \{ A,B\}$. Hence define 
\[
\phi^n_0 = \phi_0 \otimes id_{\mathbb Z/n} \colon K_0(A;\mathbb Z/n) \to K_0(B;\mathbb Z/n)
\]
which are isomorphisms for each $n\in \mathbb N$. Since the fibres $A_\infty$ and $B_\infty$ are $AF$-algebras by Corollary \ref{gabe:quotientaf}, $K_1(A_\infty;\mathbb Z/n) = K_1(B_\infty;\mathbb Z/n)=0$ for each $n\in \mathbb N$. Since the map $K_0(D;\mathbb Z/n) \to K_0(D_\infty;\mathbb Z/n)$ is clearly surjective, and $K_1(D_\infty;\mathbb Z/n)=0$, it follows by six-term exactness that 
\[
K_1(D;\mathbb Z/n) \cong K_1(D(\mathbb N);\mathbb Z/n) \cong \bigoplus_{k\in \mathbb N} K_1(D_k;\mathbb Z/n)
\]
for $D\in \{ A,B\}$ and $n\in \mathbb N$. Since $\phi_\ast\colon K_\ast(A)\to K_\ast(B)$ is an isomorphism of $\mathbb Z/2$-graded $C(\widetilde{\mathbb N},\mathbb Z)$-modules, $\phi_\ast$ restricts to an isomorphism $\phi_{\ast,k} \colon K_\ast(A_k) \to K_\ast(B_k)$ for each $k\in \mathbb N$. Lift these to isomorphisms of the total $K$-theory $\underline{\phi_{\ast,k}} \colon \underline K(A_k) \to \underline K(B_k)$. Now define the group isomorphisms $\underline \phi_0 \colon \underline K_0(A) \to \underline K_0(B)$ to be the isomorphism induced by $\phi_0$ and each $\phi_0^n$, and $\underline \phi_1 \colon \underline K_1(A) \to \underline K_1(B)$ to be the composition
\[
\underline K_1(A) \cong \bigoplus_{k\in \mathbb N} \underline K_1(A_k) \xrightarrow{\bigoplus_k \underline \phi_{1,k}} \bigoplus_{k\in \mathbb N} \underline K_1(B_k) \cong \underline K_1(B),
\]
where $\underline K_i(D) = K_i(D) \oplus \bigoplus_{n\in \mathbb N} K_i(D;\mathbb Z/n)$. It is straight forward to check that $\underline \phi = (\underline \phi_0 , \underline \phi_1) \colon \underline K(A) \to \underline K(B)$ is an isomorphism of $C(\widetilde{\mathbb N},\Lambda)$-modules.

If $A$ and $B$ are continuous $C(\widetilde{\mathbb N})$-algebras then $E(\widetilde{\mathbb N})$- and $KK(\widetilde{\mathbb N})$-theory agree by \cite[Theorem 5.4]{gabe:DM}.
\end{proof}

\end{document}